\documentclass[11pt]{article}

\usepackage[T1]{fontenc}
\usepackage{lmodern}
\usepackage{amsmath,amssymb,amsthm,mathtools}
\usepackage{enumitem}
\usepackage{microtype}
\usepackage[hidelinks]{hyperref}
\usepackage{xcolor,soul}
\usepackage{tikz}
\usetikzlibrary{graphs,positioning,arrows.meta}

\usepackage[margin=1.08in]{geometry}

\newtheorem{theorem}{Theorem}[section]
\newtheorem{lemma}[theorem]{Lemma}
\newtheorem{conjecture}[theorem]{Conjecture}

\newtheorem{claim}[theorem]{Claim}
\newtheorem{corollary}[theorem]{Corollary}
\theoremstyle{definition}

\theoremstyle{remark}

\newcommand{\rn}{\mathrm{rn}}
\newcommand{\col}{\varphi}

\date{}

\title{Rich-neighbor \(5\)-edge-colorings of subcubic graphs whose
\(3\)-vertices induce a matching}

\author{Jiangxu Kong$^1$,
 \ \ Zhuming Lin$^1$, \ \ Yan Wang$^{2,}$\thanks{\footnotesize  Corresponding author. Email: wangyan@mnnu.edu.cn‌.} \\
\normalsize 1. School of Mathematics, Hangzhou Normal University, Hangzhou, 311121, China\\
\normalsize 2. School of mathematics and statistics, Fujian Key Laboratory of \\ Granular Computing and Applications, \\ Minnan Normal University, Zhangzhou 363000, China
}

\begin{document}
\maketitle

\begin{abstract}
Let $\varphi$ be a proper edge-coloring of a graph $G$. An edge is called rich if all edges adjacent to it receive pairwise distinct colors. The coloring $\varphi$ is a rich-neighbor edge-coloring if every non-isolated edge is adjacent to a rich edge. Petruševski and Škrekovski introduced this notion as a relaxation of strong edge-coloring and conjectured that every connected subcubic graph other than $K_4$ admits a rich-neighbor $5$-edge-coloring.

We establish the conjectured bound for every subcubic graph $G$ satisfying $\Delta(G[V_3])\le 1,$ where $V_3$ is the set of vertices of degree $3$ in $G$. Equivalently, the $3$-vertices of $G$ induce a matching together with isolated vertices. This extends the result of Chen, Tan and Zhou for subcubic graphs with no adjacent $3$-vertices.
\end{abstract}

\begin{center}
\begin{minipage}{140mm}
\noindent\textbf{Keywords.}\ {\small
Rich-neighbor edge-coloring; rich edge; subcubic graph; strong edge-coloring.}
\end{minipage}
\end{center}

\maketitle

\section{Introduction}

All graphs considered in this paper are finite, undirected, and simple, unless a multigraph is explicitly introduced. For a positive integer $k$, let $[k]=\{1,\ldots,k\}$. A vertex of degree one is called a \emph{leaf}, and an edge incident with a leaf is a \emph{pendant edge}. A \emph{bridge} is an edge whose deletion increases the number of connected components. An edge is \emph{isolated} if it forms a
component isomorphic to $K_2$. Two edges are \emph{adjacent} if they share an endpoint. For an edge $e$ of a graph $G$, define its open edge-neighborhood by $N_G(e)=\{f\in E(G): f\text{ is adjacent to }e\}.$

An edge-coloring $\col\colon E(G)\to[k]$ is \emph{proper} if adjacent edges receive distinct colors. Under a proper edge-coloring $\col$, an edge $e$ is \emph{rich} if $\col$ is injective on $N_G(e)$. The
coloring $\col$ is a \emph{rich-neighbor edge-coloring} if every non-isolated edge has a rich edge in its open edge-neighborhood. The minimum number of colors in a rich-neighbor edge-coloring of $G$ is
denoted by $\chi'_{\rn}(G)$.

Rich-neighbor edge-coloring is closely related to strong edge-coloring. A strong edge-coloring of a graph $G$ is a proper edge-coloring in which every color class is an induced matching.
%Equivalently, distinct edges $e$ and $f$ receive different colors whenever $d_{L(G)}(e,f)\le 2,$ where $L(G)$ is the line graph of $G$. 
Strong edge-coloring was
introduced by Fouquet and Jolivet~\cite{FJ83}, and its study has been
motivated in particular by the conjecture of Erd\H{o}s and
Ne\v{s}et\v{r}il~\cite{EN89,FGST90}. The minimum number of colors in a
strong edge-coloring of $G$ is the \emph{strong chromatic index},
denoted by $\chi'_s(G)$. Every edge is rich under a strong edge-coloring. Consequently,
$\chi'_{\rn}(G)\le \chi'_s(G).$ For subcubic graphs, Andersen~\cite{Andersen92} and, independently, Hor\'ak, Qing, and Trotter~\cite{HQT93} proved the sharp bound $\chi'_s(G)\le 10.$
The rich-neighbor condition is substantially weaker: instead of
requiring every edge to be rich, it requires only that every
non-isolated edge be adjacent to a rich edge.

There is also a connection with normal edge-colorings of cubic graphs.
Let $\col$ be a proper edge-coloring of a cubic graph, and let $uv$ be
an edge. The edge $uv$ is called \emph{poor} if the edges incident
with $u$ or $v$ use exactly three colors, and \emph{rich} if they use
exactly five colors. Thus the latter notion agrees with the definition
of a rich edge given above. A proper edge-coloring is \emph{normal} if
every edge is either poor or rich. Normal edge-colorings were
introduced by Jaeger~\cite{Jae85,Jae88} in connection with the Petersen
coloring conjecture. The assertion that every bridgeless cubic graph
admits a normal $5$-edge-coloring is equivalent to that conjecture.
Mazzuoccolo and Mkrtchyan~\cite{MM20} subsequently proved that every
simple cubic graph admits a normal $7$-edge-coloring and that this
bound is best possible.

Petru\v{s}evski and \v{S}krekovski~\cite{PS} introduced rich-neighbor
edge-coloring as a relaxation of strong edge-coloring. They proved
that every subcubic graph admits a rich-neighbor $7$-edge-coloring and
proposed the following conjecture.

\begin{conjecture}\label{conj:PS}
Every connected subcubic graph other than $K_4$ admits a rich-neighbor
$5$-edge-coloring.
\end{conjecture}

Chen, Tan, and Zhou~\cite{CTZ} confirmed
Conjecture~\ref{conj:PS} for subcubic graphs with no adjacent
$3$-vertices. More recently, Chen, Nan, and Zhou~\cite{CNZ} proved two
rich-neighbor $6$-edge-coloring results. They showed that
$\chi'_{\rn}(G)\le 6$ if $G$ is a subcubic graph satisfying
$\operatorname{mad}(G)<36/13,$
where $\operatorname{mad}(G)$ denotes the maximum average degree of
$G$, and also if $G$ is a claw-free subcubic graph.

For a graph $G$, let $V_3(G)=\{v\in V(G):d_G(v)=3\},$
and write $V_3$ for $V_3(G)$ when the host graph is clear. The
condition
$\Delta(G[V_3])\le 1$
is equivalent to requiring every component of $G[V_3]$ to be
isomorphic to either $K_1$ or $K_2$. In particular, adjacent pairs of
$3$-vertices are allowed, provided that they form a matching. Our main
result extends the theorem of Chen, Tan, and Zhou from the case in
which $G[V_3]$ is edgeless to the case in which it has maximum degree
at most one.

\begin{theorem}\label{thm:main}
Let $G$ be a subcubic graph satisfying
$\Delta\bigl(G[V_3]\bigr)\le 1.$
Then
$\chi'_{\rn}(G)\le 5.$
\end{theorem}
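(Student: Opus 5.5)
We argue by minimal counterexample. Let $G$ be a counterexample with $|V(G)|+|E(G)|$ minimum. The hypothesis $\Delta(G[V_3])\le 1$ is preserved under deleting edges or vertices, since deletion never creates new $3$-vertices. Also, rich-neighbor colorings of components combine. Hence $G$ is connected, has no isolated edges, and every proper subgraph $H$ (in particular $G-e$, or $G$ with a small configuration deleted) admits a rich-neighbor $5$-edge-coloring. Since $K_4$ violates the hypothesis, no exceptional case arises.

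The plan is to collect a list of reducible configurations, each of the form: delete a few edges or vertices, color the rest by minimality, then extend. Extension works because the graph has many $2$-vertices, so each edge sees at most $4$ edges at distance one, and we have slack in $5$ colors.

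The key structural steps, in order, are as follows.
\begin{enumerate}
\item Show $\delta(G)\ge 2$. For a pendant edge $uv$ with $d(v)=1$, color $G-v$. We then recolor $uv$ avoiding the at most two colors at $u$, and choose it so that some edge at $u$ remains rich or becomes rich for $uv$. This requires a short argument that a rich neighbor of the edges at $u$ is not destroyed. Pendant paths through $2$-vertices are handled the same way.
\item Show that there are no two adjacent $2$-vertices, or more precisely bound the length of threads of $2$-vertices. A long thread $x\,a\,b\,y$ is reducible: delete the middle edge $ab$ and recolor it along with its neighbors so that $ab$ itself is rich. The neighbors $xa$ and $by$ then have a rich neighbor $ab$, and $ab$ has one among the old rich edges or can be made so.
\item Analyze $3$-vertices. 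By hypothesis, each $3$-vertex $v$ has at least two $2$-neighbors. Combined with step 2, the graph decomposes into ``units'': isolated $3$-vertices and $3$-$3$ edges ($K_2$ components of $G[V_3]$), joined by threads of length $1$ or $2$.
\item Show that such threads of length $2$ (one interior $2$-vertex) between two units are reducible in most configurations, by deleting the $2$-vertex and extending. The extension is a local list-coloring problem on at most six edges, where the aim is that each new edge and each old edge that lost its rich neighbor regains one.
\end{enumerate}

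Finally, a discharging argument or a direct global construction handles what remains. Here I would instead try to contract each $3$-$3$ edge to a single vertex, giving a multigraph whose structure matches Chen, Tan and Zhou's setting of no adjacent $3$-vertices, and lift their coloring. Concretely, I would make the middle edge $uv$ of each $3$-$3$ pair rich, which requires its four neighboring edges to receive four distinct colors different from the color of $uv$. Every edge adjacent to $u$ or $v$ then has the rich neighbor $uv$. The remaining $2$-vertex threads need rich neighbors, and those can be supplied by the greedy slack from step 4.

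The main obstacle is the extension step where a deleted $2$-vertex sits between two $3$-vertices that are each in a $3$-$3$ pair. There the constraints on the two new edges come from up to eight colored edges, and we must simultaneously keep $uv$ rich on both sides. I expect to handle this by a Kempe-type recoloring: swap two colors on the bicolored path through the pair, then re-check richness. The detailed case analysis on the color multisets around $u,v$ is what I expect to be lengthy but routine.
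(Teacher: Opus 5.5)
Your proposal has a genuine gap at its core: it never actually produces a coloring. The concrete global step you suggest fails. Contracting a $3$--$3$ edge $uv$, where $u$ and $v$ each have two further neighbors, creates a vertex of degree $4$. The contracted multigraph is therefore not subcubic, so the Chen--Tan--Zhou theorem cannot be applied to it, and no lifting is described. The fallback ``make every $3$--$3$ edge rich and supply the other rich neighbors by greedy slack'' is precisely the coordination problem the theorem is about. You defer it to an unspecified Kempe-type case analysis (``reducible in most configurations'', ``lengthy but routine''), which leaves the crux unproven.

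The earlier reductions are also unsupported.
\begin{itemize}
\item In Step~1, suppose you restore a pendant edge $uv$ at a $3$-vertex $u$ with other neighbors $x,y$. The new color must avoid $\varphi(ux)$ and $\varphi(uy)$. If $ux$ and $uy$ are the only rich neighbors of some edges at $x$ and at $y$, it must also avoid the other colors at $x$ and at $y$. Take $x$ a $3$-vertex (allowed, as the unique $3$-neighbor of $u$) and $y$ a $2$-vertex, with colors $1,2$ on $ux,uy$, colors $3,4$ on the other edges at $x$, and $5$ on the other edge at $y$. Then every one of the five colors is forbidden, so a recoloring argument is needed and none is given.
\item In Step~2, re-inserting the middle edge $ab$ of $u\,a\,b\,v$ makes $ab$ rich only if $\varphi(ua)\ne\varphi(bv)$. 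Minimality does not provide this.
\end{itemize}

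The missing idea is a direct global construction that uses the hypothesis $\Delta(G[V_3])\le1$ at one precise point; the paper needs neither $\delta\ge2$ nor bounded thread length.
\begin{itemize}
\item The paper reduces only nonpendant bridges. As a result, leaves hang singly at $3$-vertices and $G-V_1$ is $2$-connected.
\item It compresses the threads between $3$-vertices into a multigraph $G^*$ on $B=V_3$ and deletes the matching $M$ of length-one threads. It then takes a minimal edge cover $F$ of $G^*-M$, which is a star forest.
\item Each selected thread is colored from its star center by the periodic pattern $i,4,i^+,\dots$. This makes every selected-thread edge rich as soon as the remaining edges at each $v\in B$ receive distinct colors from a list $\Lambda(v)$ with $|\Lambda(v)|=4-d_F(v)$ and $5\in\Lambda(v)$. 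Since $F$ covers $B$, every edge incident with $B$ then has a rich neighbor.
\item The remaining edges at $B$ form an auxiliary graph of maximum degree $2$. Its list edge-coloring succeeds because the only singleton lists are $\{5\}$, and they occur only on unselected length-one threads, which form a matching by the hypothesis.
\item Long unselected threads are finished afterwards. Length three needs one color choice; length at least four uses a strong path coloring with prescribed end colors.
\end{itemize}
Nothing in your outline plays the role of the edge cover or of this list-coloring step.
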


Equivalently, Theorem~\ref{thm:main} applies whenever every
$3$-vertex of $G$ is adjacent to at most one other $3$-vertex. The
exceptional graph $K_4$ from Conjecture~\ref{conj:PS} is excluded by
the hypothesis, since $K_4[V_3]=K_4$.

The bound in Theorem~\ref{thm:main} is best possible. Let
$D=K_4-cd$, where $V(D)=\{a,b,c,d\}$; thus $a$ and $b$ are the
$3$-vertices of $D$. Then
$\Delta\bigl(D[V_3]\bigr)=1.$
Suppose that $D$ admits a rich-neighbor edge-coloring $\col$ using at
most four colors. The edge $ab$ is adjacent to every other edge of
$D$. Since these four edges avoid the color $\col(ab)$, they use at
most three colors, and hence $ab$ is not rich. Applying the
rich-neighbor condition to $ac$, we find that either $ad$ or $bc$ is
rich. In either case, the edges $ab$, $ac$, and $bd$ receive pairwise
distinct colors, so
$\col(ac)\ne\col(bd).$
Similarly, applying the rich-neighbor condition to $ad$ gives
$\col(ad)\ne\col(bc).$
These are the only two pairs of nonadjacent edges in $D$. Properness
therefore implies that all five edges of $D$ receive distinct colors,
a contradiction. Conversely, assigning distinct colors to the five
edges gives a rich-neighbor $5$-edge-coloring. Hence
$\chi'_{\rn}(D)=5.$

The main obstacle in proving Theorem~\ref{thm:main} is that the
hypothesis is not preserved by the usual operation of suppressing a
$2$-vertex. Indeed, replacing a path $uxv$, where $d_G(x)=2$, by the
edge $uv$ may create an additional adjacency between two
$3$-vertices. If each of $u$ and $v$ already has a neighbor in
$V_3$, then the resulting graph no longer satisfies
$\Delta(G[V_3])\le 1$. A direct reduction at a single $2$-vertex is
therefore unavailable. We instead treat all paths between
$3$-vertices simultaneously and coordinate the colors at their
endpoints.

The proof uses a thread decomposition with respect to
$B=V_3(G)$. Maximal paths with ends in $B$ and internal vertices
outside $B$ are represented by the edges of an auxiliary multigraph
on $B$. The threads of length one form a matching by the hypothesis
$\Delta(G[V_3])\le 1$. After separating these threads, we choose an
minimal edge cover in the remaining auxiliary multigraph.
The corresponding selected threads are colored by a fixed periodic
pattern that produces the required rich witnesses and leaves
controlled lists of available colors at their endpoints. The
unselected threads and pendant edges are then represented by an
auxiliary graph of maximum degree at most two and colored through a
list edge-coloring argument. This global coordination is what permits
adjacent pairs of $3$-vertices while retaining the conjectured bound
of five colors.

\section{Preliminaries}

Isolated vertices may be deleted. A component isomorphic to $K_2$
may likewise be removed and restored after the remaining graph has
been colored, since its unique edge has no neighbor and may be
assigned any color from the same palette. As distinct components can
be colored independently, it suffices to prove
Theorem~\ref{thm:main} for connected graphs of order at least three.

The hypothesis of Theorem~\ref{thm:main} is hereditary.

\begin{lemma}\label{lem:hereditary}
Let $G$ be a subcubic graph satisfying
$\Delta\bigl(G[V_3(G)]\bigr)\le 1.$
Then every subgraph $H$ of $G$ satisfies
$\Delta\bigl(H[V_3(H)]\bigr)\le 1.$
\end{lemma}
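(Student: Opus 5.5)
The plan is a direct degree-monotonicity argument. Fix a subgraph $H$ of $G$. Since $H$ is a subgraph, $d_H(v)\le d_G(v)\le 3$ for every $v\in V(H)$. In particular, if $d_H(v)=3$ then $d_G(v)=3$. Hence $V_3(H)\subseteq V_3(G)$.

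Next I compare adjacency. Two vertices $u,v\in V_3(H)$ that are adjacent in $H[V_3(H)]$ are joined by an edge $uv\in E(H)\subseteq E(G)$. Since both lie in $V_3(G)$, the edge $uv$ also belongs to $G[V_3(G)]$. Therefore $H[V_3(H)]$ is a subgraph of $G[V_3(G)]$.

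Finally, for any $v\in V_3(H)$,
\[
d_{H[V_3(H)]}(v)\le d_{G[V_3(G)]}(v)\le \Delta\bigl(G[V_3(G)]\bigr)\le 1,
\]
which gives $\Delta\bigl(H[V_3(H)]\bigr)\le 1$. If $V_3(H)$ is empty, the statement holds vacuously, with the maximum degree of the empty graph taken as $0$.

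There is no real obstacle here. The only point requiring care is the direction of the inclusion $V_3(H)\subseteq V_3(G)$. This inclusion uses that $G$ is subcubic, so that degree $3$ in $H$ forces degree exactly $3$ in $G$. Without that hypothesis, degree $3$ in $H$ would only give degree at least $3$ in $G$.
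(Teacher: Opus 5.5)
Your proof is correct and is essentially the paper's argument. Both derive $V_3(H)\subseteq V_3(G)$ from $3=d_H(v)\le d_G(v)\le 3$, deduce $H[V_3(H)]\subseteq G[V_3(G)]$, and read off the degree bound.
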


\begin{proof}
If $v\in V_3(H)$, then
$3=d_H(v)\le d_G(v)\le 3,$
so $v\in V_3(G)$. Hence
$V_3(H)\subseteq V_3(G)$ and $H[V_3(H)]\subseteq G[V_3(G)].$
The assertion follows.
\end{proof}

We shall use the following standard strong edge-coloring bound.

\begin{lemma}[{\cite{CCHHL,FGST90}}]\label{lem:trees-cycles}
Every subcubic tree or cycle admits a strong $5$-edge-coloring.
\end{lemma}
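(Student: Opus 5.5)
For a cycle, strong edge-coloring reduces to vertex-coloring the square of a cycle. For a tree, I would color greedily in breadth-first order and count constraints. Throughout, two edges must receive different colors exactly when they share an endpoint or are joined by a third edge.

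\textbf{Cycles.} Let $C=e_1e_2\cdots e_n$ be a cycle with its edges in cyclic order. A strong edge-coloring of $C$ is an assignment of colors in which $e_i$, $e_{i+1}$, $e_{i+2}$ are pairwise distinct for every $i$ (indices modulo $n$). This is exactly a proper vertex-coloring of the square of the cycle $C_n$.
\begin{itemize}[leftmargin=*]
\item If $n=3$ or $n=4$, give all edges distinct colors.
\item If $n=5$, use five distinct colors.
\item If $n\ge 6$, write $n=3a+4b$ with integers $a,b\ge 0$; this is possible for every $n\ge 6$. Color $C$ by concatenating $a$ blocks $1,2,3$ and $b$ blocks $1,2,3,4$ cyclically.
\end{itemize}
Every window of three consecutive edges lies inside one block or straddles one boundary. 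Each block starts with $1,2$ and ends with $2,3$ or $3,4$, so every window consists of three distinct colors, including across the closing boundary. Hence at most four colors are used when $n\neq 5$, and five when $n=5$.

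\textbf{Trees.} Let $T$ be a subcubic tree with at least one edge. Root $T$ at a leaf $r$, so every vertex has at most two children. Define the depth of an edge $uv$, where $u$ is the parent of $v$, to be the depth of $v$. I would color the edges in nondecreasing order of depth, choosing for each edge a color not used on any already colored edge within distance two. It then suffices to show that when $uv$ is colored, at most four such edges are already colored; a fifth color is then always available.

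\textbf{The count.} Let $uv$ have depth $d$, with $u$ the parent of $v$, and let $w$ be the parent of $u$ if it exists. The edges in conflict with $uv$ are those incident with $u$ or $v$, or with a neighbor of $u$ or $v$.
\begin{itemize}[leftmargin=*]
\item Edges incident with $v$ or with a child of $v$ have depth $d+1$ or $d+2$, so they are uncolored.
\item A sibling $x$ of $v$ contributes the edge $ux$, of depth $d$, which may already be colored. Its child edges have depth $d+1$ and are uncolored.
\item At $u$, the parent edge $wu$ is colored.
\item At $w$ there are at most two further edges: the parent edge of $w$, and the edge from $w$ to its other child. Both have depth at most $d-1$ and may be colored.
\end{itemize}
No other colored edge is within distance two. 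In particular, edges at the parent of $w$ are at distance three. The total is therefore at most $1+1+2=4$, as required. Near the root some of these edges do not exist, which only lowers the count.

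The only step needing care is this count: one must check that the depth ordering really leaves all edges below $v$, and below the siblings of $v$, uncolored. This is why rooting at a leaf matters.
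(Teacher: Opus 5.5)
Your proof is correct. The paper itself gives no argument for this lemma; it only quotes it from the cited references. Your route is therefore a self-contained verification of exactly the bound the paper needs.

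For cycles, the construction via $n=3a+4b$ (valid for $n\ge 6$) shows that four colors suffice unless $n=5$, which is more than required.

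For trees, your breadth-first greedy count is right. When $uv$ is colored, the only previously colored conflicting edges are the other edges at $u$ and the other edges at the parent $w$ of $u$. That is at most $(d(u)-1)+(d(w)-1)\le 4$ edges, so a fifth color is always free. Written with degrees, the same count shows that $\max_{uv\in E(T)}\bigl(d(u)+d(v)-1\bigr)$ colors suffice for any tree. This is sharp, since the edges at the two ends of a maximizing edge pairwise conflict. So the citation buys the exact values, while your argument buys independence from the literature at no cost for this paper's purposes.

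One small correction to your closing remark: rooting at a leaf is not what makes the count work. The depth order leaves every edge below $v$ and below its siblings uncolored for any choice of root. If the root $w$ has degree three, it has no parent edge, so it still contributes at most two further colored edges. Rooting at a leaf is harmless but not needed.
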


The next lemma allows the colors on the two end edges of a sufficiently
long path to be prescribed.

\begin{lemma}\label{lem:path-sequence}
Let $P=x_0x_1\cdots x_\ell$ be a path with $\ell\ge 4$ edges, and write $e_i=x_{i-1}x_i$ for $i\in[\ell]$. For any $a,b\in[5]$, the path $P$ has a strong $5$-edge-coloring $\psi$ satisfying
$\psi(e_1)=a$ and  $\psi(e_\ell)=b.$
\end{lemma}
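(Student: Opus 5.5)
A strong edge-coloring of a path is a sequence $\psi(e_1),\dots,\psi(e_\ell)$ in which any three consecutive terms are pairwise distinct. The plan is to build such a sequence by a greedy left-to-right construction with a small look-ahead at the end.

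First set $\psi(e_1)=a$. Then choose $\psi(e_2),\dots,\psi(e_{\ell-3})$ greedily, each different from its (at most) two predecessors. This is always possible, since at most two colors are forbidden among five. When $\ell=4$ this stage assigns only $e_1$.

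It remains to choose $\psi(e_{\ell-2})$ and $\psi(e_{\ell-1})$, with $\psi(e_\ell)=b$ fixed. Let $p=\psi(e_{\ell-4})$ and $q=\psi(e_{\ell-3})$, where $p\ne q$, and $p$ is absent if $\ell=4$. The constraints are:
\begin{itemize}[nosep]
\item $x=\psi(e_{\ell-2})\notin\{p,q\}$;
\item $y=\psi(e_{\ell-1})\notin\{q,x\}$;
\item $b\notin\{x,y\}$.
\end{itemize}
For the choice of $x$, we need $x\notin\{p,q,b\}$, which leaves at least two options. For the choice of $y$, we need $y\notin\{q,x,b\}$, which leaves at least two options.

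The one constraint not yet handled is $b\ne q$, which comes from the triple $e_{\ell-2},e_{\ell-1},e_\ell$; that triple actually involves only $x,y,b$, so the real conditions are $x\ne b$ and $y\ne b$. Since $e_{\ell-3}$ and $e_\ell$ are at distance three in the line graph, $q=b$ is allowed. So the end step always succeeds. The $\ell=4$ case is covered because $q=a$ there and $a=b$ is allowed.

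The only delicate point is this bookkeeping at the end. The slack of five colors against at most three forbidden ones at every step makes it routine.
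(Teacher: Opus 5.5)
Your proposal is correct and is essentially the paper's argument: fix $\psi(e_1)=a$, greedily color $e_2,\dots,e_{\ell-3}$ so that each edge avoids its two predecessors, then choose $\psi(e_{\ell-2})$ and $\psi(e_{\ell-1})$ so that each avoids its two predecessors and $b$, so at most three colors are ever forbidden. Your paragraph about ``$b\ne q$'' is worded confusingly, since no such constraint exists ($e_{\ell-3}$ and $e_\ell$ are at distance three), as you yourself conclude; this does not affect the argument.
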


\begin{proof}
Set $c_1=a$ and $c_\ell=b$. First choose
$c_2,\ldots,c_{\ell-3}$ successively, whenever this sequence is
nonempty, so that each $c_i$ avoids the already defined colors among
$c_{i-1}$ and $c_{i-2}$. Next choose $c_{\ell-2}$ and
$c_{\ell-1}$ successively, requiring each $c_i$ to avoid the already
defined colors among
$c_{i-1}, c_{i-2}, b.$
At most three colors are forbidden at every step, so all choices can
be made from $[5]$. The resulting sequence satisfies
$c_i\ne c_j$ whenever $0<|i-j|\le 2.$
Thus $\psi(e_i)=c_i$ defines the required strong edge-coloring.
\end{proof}

\section{Proof of Theorem~\ref{thm:main}}

Suppose, to the contrary, that Theorem~\ref{thm:main} is false.
Since distinct components may be colored independently, there exists
a connected counterexample. Let $G$ be such a counterexample with the
minimum number of edges. By Lemma~\ref{lem:trees-cycles}, the graph
$G$ is neither a tree nor a cycle. Hence $G$ contains a cycle and,
since it is connected and subcubic, it has a $3$-vertex.

We first eliminate nonpendant bridges.

\begin{lemma}\label{lem:bridge}
Every bridge of $G$ is a pendant edge.
\end{lemma}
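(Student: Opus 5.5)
We argue by contradiction. Suppose $e=uv$ is a bridge of $G$ with $d_G(u)\ge 2$ and $d_G(v)\ge 2$. Deleting $e$ leaves two components $G_u\ni u$ and $G_v\ni v$, each with at least one edge. Form $H_u=G_u+uv'$ and $H_v=G_v+u'v$, where $v'$ and $u'$ are new leaves. Each of $H_u$ and $H_v$ is isomorphic to a subgraph of $G$, since $H_u\subseteq G_u+uv+(\text{one edge of }G_v\text{ at }v)$ up to relabelling. Also, each has fewer edges than $G$, because $G_v$ contributes at least one edge to $G$ that is missing from $H_u$, and symmetrically. Lemma~\ref{lem:hereditary} applies to the actual subgraphs of $G$, and $H_u$ has exactly the same degrees as $G$ on $V(G_u)$. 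So $H_u$ and $H_v$ are connected, subcubic, satisfy the hypothesis, and are smaller than $G$. By minimality they admit rich-neighbor $5$-edge-colorings $\varphi_u$ and $\varphi_v$.

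Next we glue. Permute colors in $\varphi_v$ so that $\varphi_v(u'v)=\varphi_u(uv')$, and give $e$ this common color. The coloring is proper on $G$. The delicate point is that edges near $e$ may lose their rich status or their rich witness. In $H_u$, the rich witness of an edge of $G_u$ might be $uv'$ itself. Also, richness of an edge $f$ at $u$ in $H_u$ only controls the colors at $u$, whereas in $G$ it must also see the edges at $v$. To handle this we use the freedom of permuting the colors of $\varphi_v$ while fixing the color of $e$. The colors on the at most two edges of $G_u$ at $u$ and on the at most two edges of $G_v$ at $v$ are then to be made pairwise distinct where needed. With $\varphi(e)=c$ fixed, the edges at $u$ use at most two colors from the other four, as do the edges at $v$. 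A permutation of $[5]\setminus\{c\}$ applied to $\varphi_v$ can make the sets $\varphi(\partial u\setminus e)$ and $\varphi(\partial v\setminus e)$ disjoint, since $2+2\le 4$. After this, $e$ itself is rich in $G$.

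Now we check the rich-neighbor condition. Every edge adjacent to $e$ has $e$ as a rich neighbor. For edges farther away, consider an edge $f$ of $G_u$ whose rich witness $g$ in $H_u$ lies in $G_u$. If $g$ is not incident with $u$, then its neighborhood is unchanged. If $g$ is incident with $u$, then $g$ is adjacent to $e$, and its neighborhood in $G$ equals its neighborhood in $H_u$ with $uv'$ replaced by $e$ of the same color, so $g$ is still rich. If the witness of $f$ was $uv'$, then $f$ is incident with $u$, so $f$ is adjacent to $e$ and is already covered. The edge $e$ has witness $e'$ for any neighbor $e'$, and $e$ is rich, but we need a rich edge adjacent to $e$. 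For this, take the edge $g\ne e$ at $u$ that was the rich witness of $uv'$ in $H_u$; it exists because $uv'$ is non-isolated in $H_u$. As shown, $g$ remains rich in $G$.

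The main obstacle is confirming this last step: that richness of edges at $u$ survives the gluing, and that the permutation argument has enough room in the degree-$3$ cases, where $u$ and $v$ each carry two further edges. The disjointness step needs only the count $2+2\le 4$. Once it is in place, $\varphi$ is a rich-neighbor $5$-edge-coloring of $G$, contradicting the choice of $G$. Hence every bridge of $G$ is pendant.
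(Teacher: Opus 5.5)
Your proof is correct and is essentially the paper's argument: your $H_u=G_u+uv'$ is the paper's $G[V(C_u)\cup\{v\}]$ with $v$ renamed $v'$. The paper likewise matches the colors on $e$, then permutes the other four colors on the $v$-side so that the at most two colors at $u$ and the at most two at $v$ are disjoint, which makes $e$ rich, and notes that every other edge has the same neighborhood in $G$ as in its side graph, so all rich witnesses survive. Your aside that an edge $f\ne e$ at $u$ must ``also see the edges at $v$'' is mistaken, since only $e$ does, but your later neighborhood comparison is correct, so nothing is lost.
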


\begin{proof}
Suppose that $e=uv$ is a nonpendant bridge. Let $C_u$ and $C_v$ be
the components of $G-e$ containing $u$ and $v$, respectively. Since
$e$ is not pendant, both components contain an edge. Set
\[
  G_u=G\bigl[V(C_u)\cup\{v\}\bigr],
  \qquad
  G_v=G\bigl[V(C_v)\cup\{u\}\bigr].
\]
Both $G_u$ and $G_v$ are proper subgraphs of $G$ with fewer edges.
By Lemma~\ref{lem:hereditary} and the minimality of $G$, they admit
rich-neighbor $5$-edge-colorings $\col_u$ and $\col_v$. After
permuting the colors in $\col_v$, we may assume that
$\col_u(e)=\col_v(e)=\alpha.$

For a vertex $x$ of a graph $H$, let $E_H(x)$ denote the set of edges
of $H$ incident with $x$. Put
\[
  A_u=\col_u\bigl(E_{G_u}(u)\setminus\{e\}\bigr),
  \qquad
  A_v=\col_v\bigl(E_{G_v}(v)\setminus\{e\}\bigr).
\]
Properness and subcubicity give
$A_u,A_v\subseteq[5]\setminus\{\alpha\},$ $|A_u|,|A_v|\le 2.$
Since $[5]\setminus\{\alpha\}$ has four elements, there is a
permutation $\sigma$ of $[5]$ fixing $\alpha$ such that
$A_u\cap\sigma(A_v)=\varnothing.$
Replace $\col_v$ by $\sigma\circ\col_v$. The two colorings now agree
on $e$ and combine into a proper $5$-edge-coloring $\col$ of $G$.

For every $f\in E(G_u)\setminus\{e\}$,
$N_G(f)=N_{G_u}(f),$
so $f$ is rich under $\col$ if and only if it is rich under $\col_u$.
The analogous statement holds on the $v$-side, since permuting colors
preserves richness. Moreover,
$\col\bigl(N_G(e)\bigr)=A_u\cup\sigma(A_v).$
The colors in this union are pairwise distinct, and hence $e$ is
rich under $\col$.

Let $f\ne e$ be a non-isolated edge of $G$. In its side coloring,
$f$ has a rich neighbor. If that neighbor is not $e$, it remains rich
under $\col$; if it is $e$, then it is rich by the preceding
paragraph. Thus every non-isolated edge other than $e$ has a rich
neighbor.

Finally, since $C_u$ contains an edge, $e$ is non-isolated in $G_u$.
It therefore has a rich neighbor
$g\in E(G_u)\setminus\{e\}$ under $\col_u$. The edge $g$ remains rich
under $\col$, so $e$ also has a rich neighbor. Consequently, $\col$
is a rich-neighbor $5$-edge-coloring of $G$, a contradiction.
\end{proof}

As an immediate consequence, leaves can occur only in a restricted form.

\begin{corollary}\label{cor:leaf-support}
If $x$ is a leaf of $G$ with neighbor $v$, then $d_G(v)=3$, and $x$ is the unique leaf neighbor of $v$.
\end{corollary}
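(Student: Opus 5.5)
Let $x$ be a leaf of $G$ with neighbor $v$. Since $G$ is connected of order at least three (it contains a cycle), $v$ has another neighbor, so $d_G(v)\in\{2,3\}$. The pendant edge $xv$ is a bridge, and Lemma~\ref{lem:bridge} only constrains the other bridges at $v$. Both claims will follow by showing that, in the bad cases, some edge at $v$ is a nonpendant bridge.

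For the first claim, suppose $d_G(v)=2$, and let $w$ be the other neighbor of $v$. Every path from $x$ to the rest of $G$ passes through $vw$, so $vw$ is a bridge. It is not pendant: $d_G(v)=2$, and $d_G(w)\ge 2$. Indeed, if $d_G(w)=1$, then $G$ would be the path $xvw$, which contains no cycle, whereas $G$ contains a cycle. This contradicts Lemma~\ref{lem:bridge}, so $d_G(v)=3$.

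For the second claim, suppose $v$ has two leaf neighbors $x$ and $x'$, and let $w$ be its third neighbor. Then $vw$ separates $\{x,x',v\}$ from the rest of $G$, so it is a bridge. It is not pendant, because $d_G(v)=3$ and $w$ cannot be a leaf: otherwise $G$ would be the star $K_{1,3}$, which is a tree. Again Lemma~\ref{lem:bridge} gives a contradiction. The case of three leaf neighbors is excluded as well, since then $G=K_{1,3}$.

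There is no real obstacle. The only point needing care is checking that the candidate bridge has no leaf endpoint, and this uses only the fact, established earlier, that $G$ is not a tree.
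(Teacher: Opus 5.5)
Your proof is correct and follows essentially the same route as the paper: in each bad case you exhibit an edge $vw$ whose removal cuts off $v$ together with its leaf neighbors, rule out $w$ being a leaf because $G$ is not a tree, and so obtain a nonpendant bridge contradicting Lemma~\ref{lem:bridge}. The paper simply merges your two cases into one by taking $w$ to be a neighbor of $v$ such that every other neighbor of $v$ is a leaf.
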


\begin{proof}
Since $G$ is connected and has more than one edge, $d_G(v)\ge2$. Suppose that either $d_G(v)=2$ or $v$ has another leaf neighbor. Then $v$ has a neighbor $w$ such that every neighbor of $v$ other
than $w$ is a leaf. Consequently, $vw$ lies on no cycle and is therefore a bridge.

If $w$ is a leaf, then every neighbor of $v$ is a leaf, so the connected graph $G$ is a star and hence a tree, a contradiction. Otherwise, $vw$ is a nonpendant bridge, contradicting Lemma~\ref{lem:bridge}. Therefore $d_G(v)=3$, and $x$ is the unique leaf neighbor of $v$.
\end{proof}

Let $V_1(G)$ denote the set of leaves of $G$, and put $\hat G=G-V_1(G);$ see Figure~\ref{fig:G-and-Ghat}, where the filled vertices are $3$-vertices of $G$ and the unfilled vertices are $1$- or $2$-vertices. A leaf cannot be an internal vertex of a path. Hence every path in $G$ joining two vertices of $\hat G$ is contained entirely in $\hat G$.

\begin{figure}[htbp]
\centering
\begin{tikzpicture}[x=0.75cm,y=0.75cm]

% ============================================================
% Left: G
% ============================================================

% Coordinates for G
\coordinate (g1)  at (3.0,3.35);
\coordinate (g2)  at (0.8,2.0);
\coordinate (g3)  at (5.2,2.0);
\coordinate (g4)  at (1.0,0.0);
\coordinate (g5)  at (5.0,0.0);

\coordinate (g6)  at (1.90,2.70);
\coordinate (g7)  at (4.10,2.70);
\coordinate (g8)  at (3.00,0.00);
\coordinate (g9)  at (5.10,1.00);
\coordinate (g10) at (2.35,2.30);
\coordinate (g11) at (1.65,1.15);
\coordinate (g12) at (1.35,3.75);
\coordinate (g13) at (3.00,4.15);
\coordinate (g14) at (4.65,3.75);
\coordinate (g15) at (6.05,0.00);

% Edges of G
\draw[black,line width=0.9pt] (g1)--(g6)--(g2);
\draw[black,line width=0.9pt] (g1)--(g7)--(g3);
\draw[black,line width=0.9pt] (g4)--(g8)--(g5);
\draw[black,line width=0.9pt] (g2)--(g4);
\draw[black,line width=0.9pt] (g3)--(g9)--(g5);
\draw[black,line width=0.9pt] (g1)--(g10)--(g11)--(g4);
\draw[black,line width=0.9pt] (g2)--(g12)--(g13)--(g14)--(g3);
\draw[black,line width=0.9pt] (g5)--(g15);

% Filled vertices in B
\foreach \p in {g1,g2,g3,g4,g5}{
  \fill[black] (\p) circle (2.2pt);
}

% Open vertices outside B
\foreach \p in {g6,g7,g8,g9,g10,g11,g12,g13,g14,g15}{
  \draw[black,fill=white,line width=0.8pt] (\p) circle (2.2pt);
}

% Labels of G
\node[font=\scriptsize,fill=white,inner sep=1pt] at (3.0,3.72) {$v_1$};
\node[font=\scriptsize,fill=white,inner sep=1pt] at (0.32,2.15) {$v_2$};
\node[font=\scriptsize,fill=white,inner sep=1pt] at (5.68,2.15) {$v_3$};
\node[font=\scriptsize,fill=white,inner sep=1pt] at (0.52,-0.30) {$v_4$};
\node[font=\scriptsize,fill=white,inner sep=1pt] at (5.12,-0.38) {$v_5$};

\node[font=\scriptsize,fill=white,inner sep=1pt] at (1.65,3.05) {$v_6$};
\node[font=\scriptsize,fill=white,inner sep=1pt] at (4.35,3.05) {$v_7$};
\node[font=\scriptsize,fill=white,inner sep=1pt] at (3.00,-0.38) {$v_8$};
\node[font=\scriptsize,fill=white,inner sep=1pt] at (5.52,1.10) {$v_9$};
\node[font=\scriptsize,fill=white,inner sep=1pt] at (2.78,2.14) {$v_{10}$};
\node[font=\scriptsize,fill=white,inner sep=1pt] at (2.2,1.1) {$v_{11}$};
\node[font=\scriptsize,fill=white,inner sep=1pt] at (1.12,4.10) {$v_{12}$};
\node[font=\scriptsize,fill=white,inner sep=1pt] at (3.00,4.55) {$v_{13}$};
\node[font=\scriptsize,fill=white,inner sep=1pt] at (4.88,4.10) {$v_{14}$};
\node[font=\scriptsize,fill=white,inner sep=1pt] at (6.50,-0.05) {$v_{15}$};

% Label below the left graph
\node[font=\small] at (3.05,-0.95) {$G$};

% Arrow and operation label
\draw[->,black,line width=0.9pt] (6.85,2.05)--(7.75,2.05);
\node[font=\small,fill=white,inner sep=1pt] at (7.30,2.6) {$\widehat{G}=G-V_1$};

% ============================================================
% Right: Ghat = G - V_1
% ============================================================

\begin{scope}[shift={(8.2,0)}]

% Coordinates for Ghat
\coordinate (h1)  at (3.0,3.35);
\coordinate (h2)  at (0.8,2.0);
\coordinate (h3)  at (5.2,2.0);
\coordinate (h4)  at (1.0,0.0);
\coordinate (h5)  at (5.0,0.0);

\coordinate (h6)  at (1.90,2.70);
\coordinate (h7)  at (4.10,2.70);
\coordinate (h8)  at (3.00,0.00);
\coordinate (h9)  at (5.10,1.00);
\coordinate (h10) at (2.35,2.30);
\coordinate (h11) at (1.65,1.15);
\coordinate (h12) at (1.35,3.75);
\coordinate (h13) at (3.00,4.15);
\coordinate (h14) at (4.65,3.75);

% Edges of Ghat
\draw[black,line width=0.9pt] (h1)--(h6)--(h2);
\draw[black,line width=0.9pt] (h1)--(h7)--(h3);
\draw[black,line width=0.9pt] (h4)--(h8)--(h5);
\draw[black,line width=0.9pt] (h2)--(h4);
\draw[black,line width=0.9pt] (h3)--(h9)--(h5);
\draw[black,line width=0.9pt] (h1)--(h10)--(h11)--(h4);
\draw[black,line width=0.9pt] (h2)--(h12)--(h13)--(h14)--(h3);

% Filled vertices in B
\foreach \p in {h1,h2,h3,h4,h5}{
  \fill[black] (\p) circle (2.2pt);
}

% Open vertices outside B
\foreach \p in {h6,h7,h8,h9,h10,h11,h12,h13,h14}{
  \draw[black,fill=white,line width=0.8pt] (\p) circle (2.2pt);
}

% Labels of Ghat
\node[font=\scriptsize,fill=white,inner sep=1pt] at (3.18,3.72) {$v_1$};
\node[font=\scriptsize,fill=white,inner sep=1pt] at (0.32,2.15) {$v_2$};
\node[font=\scriptsize,fill=white,inner sep=1pt] at (5.68,2.15) {$v_3$};
\node[font=\scriptsize,fill=white,inner sep=1pt] at (0.52,-0.30) {$v_4$};
\node[font=\scriptsize,fill=white,inner sep=1pt] at (5.12,-0.38) {$v_5$};

\node[font=\scriptsize,fill=white,inner sep=1pt] at (1.65,3.05) {$v_6$};
\node[font=\scriptsize,fill=white,inner sep=1pt] at (4.35,3.05) {$v_7$};
\node[font=\scriptsize,fill=white,inner sep=1pt] at (3.00,-0.38) {$v_8$};
\node[font=\scriptsize,fill=white,inner sep=1pt] at (5.52,1.10) {$v_9$};
\node[font=\scriptsize,fill=white,inner sep=1pt] at (2.78,2.14) {$v_{10}$};
\node[font=\scriptsize,fill=white,inner sep=1pt] at (2.2,1.1) {$v_{11}$};
\node[font=\scriptsize,fill=white,inner sep=1pt] at (1.12,4.10) {$v_{12}$};
\node[font=\scriptsize,fill=white,inner sep=1pt] at (3.00,4.55) {$v_{13}$};
\node[font=\scriptsize,fill=white,inner sep=1pt] at (4.88,4.10) {$v_{14}$};

% Label below the right graph
\node[font=\small] at (3.05,-0.95) {$\widehat{G}=G-V_1$};

\end{scope}

\end{tikzpicture}

\caption{The original graph $G$ and the graph $\widehat{G}=G-V_1$. }
\label{fig:G-and-Ghat}
\end{figure}

\begin{claim}\label{cl:K-core}
The graph \(\hat G\) is connected and bridgeless, with
\(\delta(\hat G)\ge2\). If \(|V(\hat G)|\ge3\), then \(\hat G\) is
\(2\)-connected.
\end{claim}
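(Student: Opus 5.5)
We prove the four assertions in turn, using Lemma~\ref{lem:bridge} and Corollary~\ref{cor:leaf-support} together with the remark that every path of $G$ joining two vertices of $\hat G$ lies in $\hat G$.

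\emph{Connectivity.} Let $u,w\in V(\hat G)$. Since $G$ is connected, $G$ contains a $u$--$w$ path. By the remark, this path lies in $\hat G$. Hence $\hat G$ is connected.

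\emph{Minimum degree.} Let $v\in V(\hat G)$. If $v$ is not adjacent to a leaf of $G$, then $d_{\hat G}(v)=d_G(v)\ge 2$. If $v$ is adjacent to a leaf, then Corollary~\ref{cor:leaf-support} gives $d_G(v)=3$ and says that $v$ has exactly one leaf neighbor. Therefore $d_{\hat G}(v)=2$. In both cases $\delta(\hat G)\ge 2$. Note that $\hat G$ is nonempty because $G$ contains a cycle.

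\emph{Bridgelessness.} Let $e$ be an edge of $\hat G$. Then $e$ is an edge of $G$ with neither endpoint a leaf, so $e$ is not pendant in $G$. By Lemma~\ref{lem:bridge}, $e$ is not a bridge of $G$, so $e$ lies on a cycle $C$ of $G$. Every vertex of $C$ has degree at least $2$ in $G$, so $C$ avoids the leaves and lies in $\hat G$. Thus $e$ lies on a cycle of $\hat G$, and $\hat G$ is bridgeless.

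\emph{$2$-connectivity.} Assume $|V(\hat G)|\ge 3$ and suppose, for a contradiction, that $\hat G$ has a cut vertex $v$. The main point is that a subcubic bridgeless graph cannot have a cut vertex, and the argument is a degree count at $v$.

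\begin{itemize}[leftmargin=2em]
\item Every component $K$ of $\hat G-v$ sends at least one edge to $v$, because $\hat G$ is connected.
\item If $K$ sent exactly one edge $f$ to $v$, then deleting $f$ would separate $K$ from $v$. So $f$ would be a bridge of $\hat G$, which is impossible.
\item Hence each of the at least two components of $\hat G-v$ sends at least two edges to $v$, and so $d_{\hat G}(v)\ge 4$.
\end{itemize}

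This contradicts $d_{\hat G}(v)\le d_G(v)\le 3$. Therefore $\hat G$ has no cut vertex. Since it is connected and has at least three vertices, $\hat G$ is $2$-connected.
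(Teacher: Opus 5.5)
Your proof is correct and follows the paper's argument essentially step for step. You use Corollary~\ref{cor:leaf-support} for $\delta(\hat G)\ge2$, the fact that paths between non-leaves stay inside $\hat G$ for connectivity and bridgelessness, and the same degree count at a putative cut vertex forcing a bridge; the only cosmetic difference is that you phrase bridgelessness via a cycle through $e$ in $G$ rather than an $a$--$b$ path in $G-ab$, which is equivalent.
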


\begin{proof}
Every vertex of \(\hat G\) has degree two or three in \(G\). By
Corollary~\ref{cor:leaf-support}, a \(2\)-vertex has no leaf neighbor
and a \(3\)-vertex has at most one. Hence
\(\delta(\hat G)\ge2\). Moreover, no leaf can be an internal vertex
of a path, so the connectivity of \(G\) implies that of \(\hat G\).

If \(ab\) were a bridge of \(\hat G\), then any \(a\)-\(b\) path in
\(G-ab\) would lie in \(\hat G\), a contradiction. Thus \(ab\) would
also be a bridge of \(G\). Since
\(d_G(a),d_G(b)\ge2\), this bridge would not be pendant, contrary to
Lemma~\ref{lem:bridge}. Therefore \(\hat G\) is bridgeless.

Finally, suppose that \(|V(\hat G)|\ge3\) and \(v\) is a cut vertex of
\(\hat G\). Each component of \(\hat G-v\) is joined to \(v\). Since
\(d_{\hat G}(v)\le3\) and \(\hat G-v\) has at least two components,
one of them is joined to \(v\) by exactly one edge. This edge is a
bridge of \(\hat G\), a contradiction. Hence \(\hat G\) is
\(2\)-connected.
\end{proof}

\begin{claim}\label{cl:B-at-least-two}
$|V_3(G)|\ge2$.
\end{claim}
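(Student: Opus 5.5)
We already know $G$ has a $3$-vertex, so it suffices to rule out $|V_3(G)|=1$. Suppose, for a contradiction, that $V_3(G)=\{v\}$. We will show that $G$ is then either impossible or colorable, contradicting that $G$ is a counterexample.

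First use Corollary~\ref{cor:leaf-support}. Every leaf is adjacent to a $3$-vertex, and each $3$-vertex has at most one leaf neighbor. Hence $G$ has at most one leaf, and if it has one, that leaf is adjacent to $v$. We now split according to whether $G$ has a leaf.

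\emph{Case 1: $G$ has no leaf.} Then $\hat G=G$. By Claim~\ref{cl:K-core}, $\hat G$ is bridgeless with $\delta(\hat G)\ge 2$. Every vertex other than $v$ has degree $2$, so the degree sum is $3+2(|V(G)|-1)$, which is odd. This contradicts the handshake lemma.

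\emph{Case 2: $G$ has exactly one leaf $x$, adjacent to $v$.} Then in $\hat G$ the vertex $v$ has degree $2$, and every other vertex of $\hat G$ also has degree $2$ in $\hat G$. They have degree $2$ in $G$ and no leaf neighbors, since $x$ is the only leaf and it is adjacent to $v$. So $\hat G$ is a connected $2$-regular graph, that is, a cycle $C$. Therefore $G$ is a cycle $C$ with one pendant edge $vx$ attached.

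To finish Case 2, I would show directly that such a graph admits a rich-neighbor $5$-edge-coloring. Two routes seem available. The first is to take a strong $5$-edge-coloring of $C$ from Lemma~\ref{lem:trees-cycles} and extend it to $vx$. The edge $vx$ must avoid the colors of the edges of $C$ at distance at most one, namely the two edges at $v$ and their two other neighbors on $C$. That forbids at most $4$ colors, so one of the $5$ colors remains. The resulting coloring is strong on all of $G$, so every edge is rich, and the coloring is rich-neighbor. The second route is to cite Lemma~\ref{lem:path-sequence} to handle the cycle with prescribed end colors. The first route is cleaner, so I would use it, checking only that strongness is preserved around $v$. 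The small cycles $C_3$ and $C_4$ need a quick check, since there the relevant edges of $C$ are fewer but still number at most $4$. Either way we contradict the choice of $G$ as a counterexample, so $|V_3(G)|\ge 2$.

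The only delicate point is this final extension step. It is routine, so I expect no real obstacle; the parity argument in Case 1 is immediate.
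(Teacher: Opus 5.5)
Your proposal is correct and follows essentially the same route as the paper: a parity count forces the unique $3$-vertex $v$ to carry the single pendant edge, so $\hat G$ is a cycle, and a strong $5$-edge-coloring of that cycle extends to the pendant edge by avoiding the at most four colors on the two cycle edges at $v$ and their other cycle neighbors. The differences are cosmetic. You split on whether a leaf exists before applying the handshake lemma, whereas the paper applies it directly in $\hat G$ to get $d_{\hat G}(v)=2$. You also take the case $V_3(G)=\varnothing$ from the earlier remark instead of re-proving it.
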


\begin{proof}
If $V_3(G)=\varnothing$, then the connected graph $G$ has maximum degree at most two. Since $G$ contains a cycle, it must itself be a cycle, contradicting Lemma~\ref{lem:trees-cycles}.

Suppose that $V_3(G)=\{v\}$. Every vertex of $\hat G$ other than $v$ has degree two in $\hat G$. By the handshaking lemma, $d_{\hat G}(v)$ is even. Since $2\le d_{\hat G}(v)\le3,$ we have $d_{\hat G}(v)=2$. Thus $\hat G$ is a connected $2$-regular graph and hence a cycle. Corollary~\ref{cor:leaf-support} now implies that $G$ is obtained from $\hat G$ by attaching exactly one pendant edge $g$ at $v$.

Let \(\varphi\) be a strong \(5\)-edge-coloring of \(\hat G\). Let
\(e,e'\) be the cycle edges incident with \(v\), and let \(f,f'\) be
their other cycle neighbors, where \(f=f'\) if \(\hat G\) is a
triangle. Choose
\(
\beta\in[5]\setminus
 \{\varphi(e),\varphi(e'),\varphi(f),\varphi(f')\}
\)
and set \(\varphi(g)=\beta\). The extension is proper, and the choice
of \(\beta\) ensures that \(e\) and \(e'\) remain rich. The
neighborhood of every other cycle edge is unchanged, so all cycle
edges remain rich. Hence every edge of \(G\) has a rich neighbor, a
contradiction.

\end{proof}

Put $B=V_3(G)$. A \emph{$B$-thread} is a maximal path in $\hat G$ with both endpoints in $B$ and all internal vertices outside $B$. The length of a thread is its number of edges.

Every vertex of $\hat G-B$ has degree two in $\hat G$. Hence every
component of $\hat G-B$ is either a path, possibly consisting of a
single vertex, or a cycle. A cycle component would have no neighbor
in $B$, contrary to the connectivity of $\hat G$. Thus every
component of $\hat G-B$ is a path.

Each such component is joined to $B$ by exactly two edges. These two
edges have distinct endpoints in $B$. Indeed, if both ended at some
$v\in B$, then, since $|B|\ge2$, deleting $v$ would separate that
component from the remaining vertices of $\hat G$, contradicting
Claim~\ref{cl:K-core}. Consequently, each component of $\hat G-B$,
together with its two attachment edges, forms a $B$-thread with
distinct endpoints. In addition, every edge of $\hat G[B]$ is a
$B$-thread of length one. Therefore the $B$-threads partition
$E(\hat G)$.

Compress every $B$-thread to a single edge joining its endpoints.
The resulting graph is a loopless multigraph $G^*$ with
$V(G^*)=B$ and $d_{G^*}(v)=d_{\hat G}(v)\in\{2,3\}$ for every $v\in B;$
see Figure~\ref{fig:Gstar-and-Gprime}. Parallel edges may occur
because distinct threads may have the same pair of endpoints.

Let $M$ be the set of edges of $G^*$ corresponding to threads of
length one. These are precisely the edges of $G$ joining two
$3$-vertices. Since $\Delta(G[B])\le1$,
\begin{equation}\label{eq:M-matching1}
  M\text{ is a matching in }G^*.
\end{equation}
Set $G'=G^*-M;$
see Figure~\ref{fig:Gstar-and-Gprime}. For every $v\in B$,
$d_{G'}(v)=d_{G^*}(v)-d_M(v)\ge2-1=1.$
Thus $G'$ has no isolated vertex.

Recall that an \emph{edge cover} of a graph $H$ is a set $F\subseteq E(H)$ incident with every vertex of $H$. It is \emph{minimal} if no proper subset of $F$ is an edge cover.
Since $G'$ has no isolated vertex, it has an edge cover. Fix an
minimal edge cover $F\subseteq E(G')$, and let $G'_F$ be the
spanning submultigraph of $G'$ with edge set $F$; see
Figure~\ref{fig:Gprime-and-GprimeF}.

\begin{figure}[htbp]
\centering
\begin{tikzpicture}[x=1cm,y=1cm]

% ============================================================
% Left: G^*
% ============================================================

\node[font=\bfseries] at (2.7,-0.3) {$G^{*}$};

% Coordinates for G^*
\coordinate (a1) at (2.7,3.1);
\coordinate (a2) at (0.8,2.0);
\coordinate (a3) at (4.6,2.0);
\coordinate (a4) at (1.3,0.2);
\coordinate (a5) at (4.1,0.2);

% Edges of G^*
\draw[black,line width=0.9pt] (a1)--(a2);
\draw[black,line width=0.9pt] (a1)--(a3);
\draw[black,line width=0.9pt] (a4)--(a5);
\draw[black,line width=0.9pt] (a2)--(a4);   % edge in M
\draw[black,line width=0.9pt] (a3)--(a5);
\draw[black,line width=0.9pt] (a1)--(a4);
\draw[black,line width=0.9pt] (a2)--(a3);

% Vertices of G^*
\foreach \p in {a1,a2,a3,a4,a5}{
  \fill[black] (\p) circle (2.2pt);
}

% Labels of vertices
\node[font=\scriptsize,fill=white,inner sep=1pt] at (2.88,3.45) {$v_1$};
\node[font=\scriptsize,fill=white,inner sep=1pt] at (0.48,2.12) {$v_2$};
\node[font=\scriptsize,fill=white,inner sep=1pt] at (4.92,2.12) {$v_3$};
\node[font=\scriptsize,fill=white,inner sep=1pt] at (1.02,-0.10) {$v_4$};
\node[font=\scriptsize,fill=white,inner sep=1pt] at (4.10,-0.12) {$v_5$};

% Mark the edge in M
\node[font=\scriptsize,fill=white,inner sep=1pt] at (0.55,1.10) {$M$};

% Under-text
%\node[font=\small] at (3.2,-0.3) {$M=\{v_2v_4\}$};

% Arrow between the two graphs
\draw[->,black,line width=0.9pt] (6.0,1.65)--(7.3,1.65);
\node[font=\small,fill=white,inner sep=1pt] at (6.65,2.05) {$G' = G^{*}-M$};

% ============================================================
% Right: G'
% ============================================================

\begin{scope}[shift={(8.2,0)}]

\node[font=\bfseries] at (2.7,-0.3) {$G'$};

% Coordinates for G'
\coordinate (b1) at (2.7,3.1);
\coordinate (b2) at (0.8,2.0);
\coordinate (b3) at (4.6,2.0);
\coordinate (b4) at (1.3,0.2);
\coordinate (b5) at (4.1,0.2);

% Edges of G'
\draw[black,line width=0.9pt] (b1)--(b2);
\draw[black,line width=0.9pt] (b1)--(b3);
\draw[black,line width=0.9pt] (b4)--(b5);
\draw[black,line width=0.9pt] (b3)--(b5);
\draw[black,line width=0.9pt] (b1)--(b4);
\draw[black,line width=0.9pt] (b2)--(b3);

% Vertices of G'
\foreach \p in {b1,b2,b3,b4,b5}{
  \fill[black] (\p) circle (2.2pt);
}

% Labels of vertices
\node[font=\scriptsize,fill=white,inner sep=1pt] at (2.88,3.45) {$v_1$};
\node[font=\scriptsize,fill=white,inner sep=1pt] at (0.48,2.12) {$v_2$};
\node[font=\scriptsize,fill=white,inner sep=1pt] at (4.92,2.12) {$v_3$};
\node[font=\scriptsize,fill=white,inner sep=1pt] at (1.02,-0.10) {$v_4$};
\node[font=\scriptsize,fill=white,inner sep=1pt] at (4.10,-0.12) {$v_5$};

\end{scope}

\end{tikzpicture}
\caption{The multigraph $G^{*}$ and the graph $G'=G^{*}-M$.}
\label{fig:Gstar-and-Gprime}
\end{figure}

\begin{lemma}\label{lem:star-forest}
Every component of $G'_F$ is a star, and no two edges of $F$ are
parallel.
\end{lemma}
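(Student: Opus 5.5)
This is the standard fact that a minimal edge cover of a loopless multigraph is a star forest without repeated edges. The plan is to deduce everything from minimality of $F$: an edge $e\in F$ can be removed only if both of its endpoints remain covered. So for every $e=uv\in F$, at least one endpoint, say $u$, satisfies $d_{F}(u)=1$. Otherwise $F\setminus\{e\}$ would still be an edge cover, contradicting minimality. Loops are absent because $G^*$, and hence $G'\subseteq G^*$, is loopless.

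First, no two edges of $F$ are parallel. Suppose $e,e'\in F$ both join $u$ and $v$. Then $d_F(u)\ge2$ and $d_F(v)\ge2$, so neither endpoint of $e$ is covered only by $e$. Thus $F\setminus\{e\}$ is an edge cover, a contradiction. Hence $G'_F$ is a simple graph.

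Next, every component of $G'_F$ is a star. A component $K$ of $G'_F$ is not a single vertex, because $F$ covers every vertex and $G'_F$ is spanning. It suffices to show that $K$ contains no path with three edges and no cycle.

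\begin{itemize}[leftmargin=*]
\item Suppose $K$ contains a path $abcd$ with three edges, where $d=a$ is allowed, giving a triangle. The middle edge $bc$ has $d_F(b)\ge2$ and $d_F(c)\ge2$, which contradicts the observation above.
\item A cycle of length at least $4$ contains such a path. A cycle of length $2$ is a pair of parallel edges, already excluded. A triangle is the case $d=a$ above.
\end{itemize}

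So $K$ is a tree in which every path has at most two edges, that is, a tree of diameter at most $2$. Such a tree is a star (possibly $K_2$).

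I expect no real obstacle. The only points needing care are that $G'$ is a multigraph, so parallel edges and $2$-cycles must be handled explicitly as above, and that a spanning minimal edge cover leaves no isolated vertex in $G'_F$.
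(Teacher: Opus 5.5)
Your proof is correct and is essentially the paper's argument: both use minimality to show that every edge of $F$ has an endpoint of degree one in $G'_F$, and both note that this immediately rules out parallel edges. For the star structure you exclude $3$-edge paths and cycles to obtain a tree of diameter at most $2$, whereas the paper takes a vertex $x$ of degree at least two, observes that all its neighbors are leaves, and concludes by connectivity that every edge meets $x$; the difference is only cosmetic.
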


\begin{proof}
By the minimality of $F$, every edge $xy\in F$ has an
endpoint of degree one in $G'_F$; otherwise, both $x$ and $y$ would
remain covered after deleting $xy$, and $F-\{xy\}$ would still be an
edge cover of $G'$. In particular, $F$ contains no parallel edges,
since the endpoints of two parallel edges would both have degree at
least two in $G'_F$.

Let $C$ be a component of $G'_F$. If $C\cong K_2$, then $C$ is a
star. Otherwise, choose a vertex $x\in V(C)$ with $d_C(x)\ge2$.
Every neighbor of $x$ has degree one in $C$. Since $C$ is connected,
every edge of $C$ is incident with $x$. Hence $C$ is a star.
\end{proof}

\begin{figure}[htbp]
\centering
\begin{tikzpicture}[x=1cm,y=1cm]

% ============================================================
% Left: G'
% ============================================================

%\node[font=\bfseries] at (2.7,4.05) {$G'$};

% Coordinates for G'
\coordinate (a1) at (2.7,3.0);
\coordinate (a2) at (0.8,2.0);
\coordinate (a3) at (4.6,2.0);
\coordinate (a4) at (1.3,0.2);
\coordinate (a5) at (4.1,0.2);

% Edges of G'
\draw[black,line width=0.9pt] (a1)--(a2);
\draw[black,line width=0.9pt] (a1)--(a3);
\draw[black,line width=0.9pt] (a4)--(a5);
\draw[black,line width=0.9pt] (a3)--(a5);
\draw[black,line width=0.9pt] (a1)--(a4);
\draw[black,line width=0.9pt] (a2)--(a3);

% Vertices of G'
\foreach \p in {a1,a2,a3,a4,a5}{
  \fill[black] (\p) circle (2.2pt);
}

% Labels of G'
\node[font=\scriptsize,fill=white,inner sep=1pt] at (2.88,3.35) {$v_1$};
\node[font=\scriptsize,fill=white,inner sep=1pt] at (0.45,2.15) {$v_2$};
\node[font=\scriptsize,fill=white,inner sep=1pt] at (4.95,2.15) {$v_3$};
\node[font=\scriptsize,fill=white,inner sep=1pt] at (1.02,-0.10) {$v_4$};
\node[font=\scriptsize,fill=white,inner sep=1pt] at (4.10,-0.12) {$v_5$};

% Chosen edge cover label
\node[font=\small] at (2.7,-0.65)
{$F=\{v_1v_2,\ v_1v_3,\ v_4v_5\}$};

% Arrow
\draw[->,black,line width=0.9pt] (5.75,1.65)--(7.05,1.65);

\node[font=\small,fill=white,inner sep=1pt,align=center] at (6.40,2.95)
{take the\\
spanning subgraph\\
with edge set \(F\)};

% ============================================================
% Right: G'_F
% ============================================================

\begin{scope}[shift={(8.0,0)}]

%\node[font=\bfseries] at (2.7,4.05) {$G'_F$};

% Coordinates for G'_F
\coordinate (b1) at (2.7,3.0);
\coordinate (b2) at (0.8,2.0);
\coordinate (b3) at (4.6,2.0);
\coordinate (b4) at (1.3,0.2);
\coordinate (b5) at (4.1,0.2);

% Edges of G'_F
\draw[black,line width=0.9pt] (b1)--(b2);
\draw[black,line width=0.9pt] (b1)--(b3);
\draw[black,line width=0.9pt] (b4)--(b5);

% Vertices of G'_F
\foreach \p in {b1,b2,b3,b4,b5}{
  \fill[black] (\p) circle (2.2pt);
}

% Labels of G'_F
\node[font=\scriptsize,fill=white,inner sep=1pt] at (2.88,3.35) {$v_1$};
\node[font=\scriptsize,fill=white,inner sep=1pt] at (0.45,2.15) {$v_2$};
\node[font=\scriptsize,fill=white,inner sep=1pt] at (4.95,2.15) {$v_3$};
\node[font=\scriptsize,fill=white,inner sep=1pt] at (1.02,-0.10) {$v_4$};
\node[font=\scriptsize,fill=white,inner sep=1pt] at (4.10,-0.12) {$v_5$};

\node[font=\small] at (2.7,-0.65)
{ \(G'_F\) with \(E(G'_F)=F\)};

\end{scope}

\end{tikzpicture}

\caption{The graph \(G'\) and the spanning submultigraph \(G'_F\) determined by the minimal edge cover \(F=\{v_1v_2,v_1v_3,v_4v_5\}\).}
\label{fig:Gprime-and-GprimeF}
\end{figure}

For each component of $G'_F$, choose a center $z$; if the component is a single edge, choose either endpoint. Let $z_1,\ldots,z_t$ be its leaves. Since $\Delta(G')\le3$, $1\le t\le3.$ For each $i\in[t]$, let $P_i=x^i_0x^i_1\cdots x^i_{\ell_i}$
be the $B$-thread corresponding to the edge $zz_i\in F$, oriented so
that $x^i_0=z, x^i_{\ell_i}=z_i.$
Write $e^i_j=x^i_{j-1}x^i_j$ for $j\in[\ell_i]$. Since
$F\subseteq E(G')=E(G^*)\setminus M$
and $M$ consists precisely of the edges corresponding to threads of
length one,
\begin{equation}\label{eq:selected-long}
  \ell_i\ge2
  \qquad\text{for every }i\in[t].
\end{equation}

For $i\in[3]$, define its cyclic successor by
$1^+=2, 2^+=3, 3^+=1.$
Color the edges of $P_i$, in order from $z$ to $z_i$, by
\begin{equation}\label{eq:periodic-pattern1}
  \col(e^i_j)=
  \begin{cases}
    i,   & j\equiv1\pmod3,\\
    4,   & j\equiv2\pmod3,\\
    i^+, & j\equiv0\pmod3.
  \end{cases}
\end{equation}
Thus the color sequence on $P_i$ is
$i,4,i^+,i,4,i^+,\ldots.$
Any three consecutive colors in this sequence are pairwise distinct.
Moreover, the first edges of the $t$ threads incident with $z$ receive
the distinct colors $1,\ldots,t$. Hence
\eqref{eq:periodic-pattern1} defines a proper partial edge-coloring of
all selected threads, using only the colors $1,2,3,4$; see
Figure~\ref{fig4}.

\begin{figure}[htbp]
\centering
\begin{tikzpicture}[x=0.78cm,y=0.78cm]

% Fix the bounding box so that the figure, together with the right-side list, is centered
\path[use as bounding box] (-0.25,-1.85) rectangle (9.90,4.85);

% ============================================================
% Coordinates
% ============================================================

\coordinate (v1)  at (3.0,3.35);
\coordinate (v2)  at (0.8,2.0);
\coordinate (v3)  at (5.2,2.0);
\coordinate (v4)  at (1.0,0.0);
\coordinate (v5)  at (5.0,0.0);

\coordinate (v6)  at (1.90,2.70);
\coordinate (v7)  at (4.10,2.70);
\coordinate (v8)  at (3.00,0.00);
\coordinate (v9)  at (5.10,1.00);
\coordinate (v10) at (2.35,2.30);
\coordinate (v11) at (1.65,1.15);
\coordinate (v12) at (1.35,3.75);
\coordinate (v13) at (3.00,4.15);
\coordinate (v14) at (4.65,3.75);
\coordinate (v15) at (6.05,0.00);

% ============================================================
% Uncolored edges, drawn first in light gray
% ============================================================

\draw[black!35,line width=0.8pt] (v2)--(v4);
\draw[black!35,line width=0.8pt] (v3)--(v9)--(v5);
\draw[black!35,line width=0.8pt] (v1)--(v10)--(v11)--(v4);
\draw[black!35,line width=0.8pt] (v2)--(v12)--(v13)--(v14)--(v3);
\draw[black!35,line width=0.8pt] (v5)--(v15);

% ============================================================
% Colored selected B-threads
% Red   = color 1
% Green = color 2
% Blue  = color 4
% ============================================================

% Thread v_1 v_6 v_2: color sequence 1,4
\draw[red,line width=1.45pt] (v1)--(v6);
\draw[blue,line width=1.45pt] (v6)--(v2);

% Thread v_1 v_7 v_3: color sequence 2,4
\draw[green!60!black,line width=1.45pt] (v1)--(v7);
\draw[blue,line width=1.45pt] (v7)--(v3);

% Thread v_4 v_8 v_5: color sequence 1,4
\draw[red,line width=1.45pt] (v4)--(v8);
\draw[blue,line width=1.45pt] (v8)--(v5);

% ============================================================
% Vertices
% Filled black vertices are B-vertices
% Open vertices are outside B
% ============================================================

\fill[black] (v1) circle (2.2pt);
\fill[black] (v2) circle (2.2pt);
\fill[black] (v3) circle (2.2pt);
\fill[black] (v4) circle (2.2pt);
\fill[black] (v5) circle (2.2pt);

\draw[black,fill=white,line width=0.8pt] (v6) circle (2.2pt);
\draw[black,fill=white,line width=0.8pt] (v7) circle (2.2pt);
\draw[black,fill=white,line width=0.8pt] (v8) circle (2.2pt);
\draw[black,fill=white,line width=0.8pt] (v9) circle (2.2pt);
\draw[black,fill=white,line width=0.8pt] (v10) circle (2.2pt);
\draw[black,fill=white,line width=0.8pt] (v11) circle (2.2pt);
\draw[black,fill=white,line width=0.8pt] (v12) circle (2.2pt);
\draw[black,fill=white,line width=0.8pt] (v13) circle (2.2pt);
\draw[black,fill=white,line width=0.8pt] (v14) circle (2.2pt);
\draw[black,fill=white,line width=0.8pt] (v15) circle (2.2pt);

% ============================================================
% Vertex labels, placed away from edges
% ============================================================

\node[font=\scriptsize,fill=white,inner sep=1pt] at (3.00,3.65) {$v_1$};
\node[font=\scriptsize,fill=white,inner sep=1pt] at (0.28,2.22) {$v_2$};
\node[font=\scriptsize,fill=white,inner sep=1pt] at (5.72,2.22) {$v_3$};
\node[font=\scriptsize,fill=white,inner sep=1pt] at (0.50,-0.34) {$v_4$};
\node[font=\scriptsize,fill=white,inner sep=1pt] at (5.16,-0.42) {$v_5$};

\node[font=\scriptsize,fill=white,inner sep=1pt] at (1.68,3.08) {$v_6$};
\node[font=\scriptsize,fill=white,inner sep=1pt] at (4.32,3.08) {$v_7$};
\node[font=\scriptsize,fill=white,inner sep=1pt] at (3.00,-0.42) {$v_8$};
\node[font=\scriptsize,fill=white,inner sep=1pt] at (5.55,1.05) {$v_9$};
\node[font=\scriptsize,fill=white,inner sep=1pt] at (2.82,2.22) {$v_{10}$};
\node[font=\scriptsize,fill=white,inner sep=1pt] at (2.10,1.08) {$v_{11}$};
\node[font=\scriptsize,fill=white,inner sep=1pt] at (1.08,4.12) {$v_{12}$};
\node[font=\scriptsize,fill=white,inner sep=1pt] at (3.00,4.60) {$v_{13}$};
\node[font=\scriptsize,fill=white,inner sep=1pt] at (4.92,4.12) {$v_{14}$};
\node[font=\scriptsize,fill=white,inner sep=1pt] at (6.52,-0.06) {$v_{15}$};

% ============================================================
% Edge color labels, shifted away from the edges
% ============================================================

\node[font=\scriptsize,text=red,fill=white,inner sep=1pt] at (2.30,3.28) {$1$};
\node[font=\scriptsize,text=blue,fill=white,inner sep=1pt] at (1.25,2.62) {$4$};

\node[font=\scriptsize,text=green!60!black,fill=white,inner sep=1pt] at (3.70,3.28) {$2$};
\node[font=\scriptsize,text=blue,fill=white,inner sep=1pt] at (4.80,2.62) {$4$};

\node[font=\scriptsize,text=red,fill=white,inner sep=1pt] at (2.00,0.30) {$1$};
\node[font=\scriptsize,text=blue,fill=white,inner sep=1pt] at (4.00,0.30) {$4$};

% ============================================================
% Right-side list of available color sets
% ============================================================

\draw[black,line width=0.7pt,rounded corners=2pt] (6.95,0.15) rectangle (10.5,3.5);

\node[font=\scriptsize\bfseries,anchor=west] at (7.15,3.15)
{Available sets};

\node[font=\scriptsize,anchor=west] at (7.15, 2.65)
{$\Lambda(v_1)=\{3,5\}$};

\node[font=\scriptsize,anchor=west] at (7.15, 2.15)
{$\Lambda(v_2)=\{2,3,5\}$};

\node[font=\scriptsize,anchor=west] at (7.15,1.65)
{$\Lambda(v_3)=\{1,3,5\}$};

\node[font=\scriptsize,anchor=west] at (7.15,1.15)
{$\Lambda(v_4)=\{2,3,5\}$};

\node[font=\scriptsize,anchor=west] at (7.15, 0.65)
{$\Lambda(v_5)=\{2,3,5\}$};

% ============================================================
% Legend below the graph
% ============================================================

% First row of the legend
% ============================================================
% Centered legend
% ============================================================
\begin{scope}[shift={(3.5,0)}]

% First row
\draw[red,line width=1.45pt] (-2.75,-1.10)--(-2.45,-1.10);
\node[font=\scriptsize,anchor=west] at (-2.30,-1.10) {color \(1\)};

\draw[green!60!black,line width=1.45pt] (-0.75,-1.10)--(-0.45,-1.10);
\node[font=\scriptsize,anchor=west] at (-0.30,-1.10) {color \(2\)};

\draw[blue,line width=1.45pt] (1.25,-1.10)--(1.55,-1.10);
\node[font=\scriptsize,anchor=west] at (1.70,-1.10) {color \(4\)};

\draw[black!35,line width=0.8pt] (3.40,-1.10)--(3.70,-1.10);
\node[font=\scriptsize,anchor=west] at (3.9,-1.10) {uncolored at this step};

\end{scope}

\end{tikzpicture}

\caption{The coloring of the selected \(B\)-threads in the original graph, together with the corresponding available color sets \(\Lambda(v)\) for \(v\in B\).}\label{fig4}
\end{figure}

We next specify the colors available at each vertex of \(B\) for its
uncolored incident edges. Consider a star component of \(G'_F\) with
center \(z\) and leaves \(z_1,\ldots,z_t\), where \(1\le t\le3\).
For each \(i\in[t]\), the selected thread \(P_i\) starts at \(z\) with
color \(i\). Since \(\ell_i\ge2\), its second edge has color \(4\).
To ensure that every first edge \(e^i_1\) is rich in the completed
coloring, the remaining edges incident with \(z\) must avoid the
colors \(1,\ldots,t\) and \(4\). Define
\[
\Lambda(z)
=[5]\setminus\bigl(\{1,\ldots,t\}\cup\{4\}\bigr).
\]

At a leaf \(z_i\), let
\(
a_i=\col(e^i_{\ell_i-1})
\)
\(
b_i=\col(e^i_{\ell_i}).
\)
To preserve properness at \(z_i\) and ensure that
\(e^i_{\ell_i}\) is rich, the remaining edges incident with \(z_i\)
must avoid both \(a_i\) and \(b_i\). Accordingly, define
\(
\Lambda(z_i)=[5]\setminus\{a_i,b_i\}.
\)
The colors \(a_i\) and \(b_i\) are distinct by
\eqref{eq:periodic-pattern1}.

Write \(d_F(v)=d_{G'_F}(v)\). Each \(v\in B\) is incident with exactly
\(d_F(v)\) selected threads and hence with exactly \(d_F(v)\) already
colored edges. Since \(d_G(v)=3\), exactly \(3-d_F(v)\) edges incident
with \(v\) remain uncolored. Moreover, the definitions above and
\eqref{eq:periodic-pattern1} give
\begin{equation}\label{eq:port-data}
|\Lambda(v)|=4-d_F(v)
\quad\text{and}\quad
5\in\Lambda(v)
\qquad\text{for every }v\in B.
\end{equation}

\iffalse
\begin{claim}\label{cl:port-lists}
For every \(v\in B\), exactly \(3-d_F(v)\) edges incident with \(v\) remain
uncolored. Moreover,
\(
|\Lambda(v)|=4-d_F(v),
\)
and
\(
5\in\Lambda(v).
\)
\end{claim}

\begin{proof}
The selected \(F\)-threads incident with \(v\) account for precisely
\(d_F(v)\) of the three edges incident with \(v\) in \(G\). These incident
edges have already been colored when the selected threads were colored.
Therefore exactly
\(
3-d_F(v)
\)
edges incident with \(v\) remain uncolored.

It remains to compute the size of \(\Lambda(v)\). If \(v\) is a center and
\(d_F(v)=t\), then, by definition,
\(
\Lambda(v)=[5]\setminus\bigl(\{1,\ldots,t\}\cup\{4\}\bigr).
\)
Since \(t\le 3\), the color \(4\) is not contained in \(\{1,\ldots,t\}\). Hence
\(
|\Lambda(v)|=5-(t+1)=4-t=4-d_F(v).
\)
If \(v\) is a leaf of \(G'_F\), then \(d_F(v)=1\), and \(\Lambda(v)\) is obtained
from \([5]\) by deleting two distinct colors. Thus
\(
|\Lambda(v)|=3=4-d_F(v).
\)

Finally, the colors excluded from \(\Lambda(v)\) are among
\(\{1,2,3,4\}\). Hence the color \(5\) is never excluded, and so
\(
5\in\Lambda(v)
\)
for every \(v\in B\).
\end{proof}
\fi

\begin{lemma}\label{lem:selected-rich}
Suppose that all remaining uncolored edges incident with each
\(v\in B\) are assigned pairwise distinct colors from \(\Lambda(v)\).
Then the resulting partial coloring is proper at every vertex of
\(B\), and the colors on \(N_G(e)\) are pairwise distinct for every
edge \(e\) of a selected thread. Consequently, every selected-thread
edge is rich in every proper edge-coloring of \(G\) extending this
partial coloring.
\end{lemma}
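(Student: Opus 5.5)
The approach is a direct local check: the neighborhood of each selected-thread edge is determined by the periodic pattern \eqref{eq:periodic-pattern1}, except at the endpoints in \(B\). There the uncolored edges are constrained by \(\Lambda\).

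\textbf{Properness at \(B\).} Let \(v\in B\). The colored edges at \(v\) are first or last edges of selected threads, and these colors are excluded from \(\Lambda(v)\) by definition.
\begin{itemize}
\item At a center these colors are \(1,\ldots,t\).
\item At a leaf \(z_i\) this color is \(b_i\).
\end{itemize}
The uncolored edges at \(v\) receive pairwise distinct colors from \(\Lambda(v)\), so all edges at \(v\) get distinct colors. One bookkeeping point: a vertex lies in exactly one star component of \(G'_F\), either as its center or as a single leaf. Hence \(\Lambda(v)\) is well defined, and no vertex carries two selected thread ends from different stars.

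\textbf{Neighborhoods of thread edges.} Fix an edge \(e=e^i_j\) of \(P_i\). Its neighbors are the edges at \(x^i_{j-1}\) and at \(x^i_j\), other than \(e\) itself.

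For an internal vertex \(x\) of \(P_i\), we have \(d_G(x)=2\): internal vertices lie outside \(B\), and a \(2\)-vertex has no leaf neighbor by Corollary~\ref{cor:leaf-support}. So at \(x\) the only other edge is the adjacent thread edge. Consequently, if \(2\le j\le\ell_i-1\), then \(N_G(e)=\{e^i_{j-1},e^i_{j+1}\}\). These two edges are distinct in color, because any three consecutive colors of the pattern are pairwise distinct.

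Now suppose \(j=1\), so \(e\) contains the center \(z\). Then \(N_G(e)\) consists of:
\begin{itemize}
\item the first edges \(e^k_1\) of the other selected threads at \(z\), with colors in \(\{1,\ldots,t\}\setminus\{i\}\);
\item the uncolored edges at \(z\), with distinct colors in \(\Lambda(z)\), which avoids \(\{1,\ldots,t,4\}\);
\item the edge \(e^i_2\), of color \(4\); this uses \(\ell_i\ge2\) from \eqref{eq:selected-long}.
\end{itemize}
These three groups are pairwise disjoint in color, and each group is internally distinct. Hence the colors on \(N_G(e)\) are pairwise distinct.

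Suppose \(j=\ell_i\), so \(e\) contains the leaf \(z_i\). Then \(N_G(e)\) consists of \(e^i_{\ell_i-1}\), of color \(a_i\), together with the uncolored edges at \(z_i\), which get distinct colors from \(\Lambda(z_i)\not\ni a_i\).

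The main obstacle is the short thread \(\ell_i=2\). There the single edge \(e^i_1\) touches both \(z\) and \(x^i_1\), and \(e^i_2\) touches \(z_i\). Both edges still fit the analysis above, because the case \(2\le j\le\ell_i-1\) is simply empty. Since \(e^i_1\) and \(e^i_2\) are distinct edges, the two endpoint cases never concern the same edge, so nothing overlaps.

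\textbf{Conclusion.} Every edge adjacent to a selected-thread edge is either a thread edge or an edge at a vertex of \(B\), and all of these are already colored. Therefore any extension of the partial coloring leaves the colors on \(N_G(e)\) unchanged, and each such \(e\) is rich.
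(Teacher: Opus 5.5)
Your proof is correct and follows the paper's argument essentially step for step. You get properness at centers and leaves directly from the definition of \(\Lambda\), and you split \(e^i_j\) into first, last and internal edges, using the disjoint color groups \(\{4\}\), \(\{1,\ldots,t\}\setminus\{i\}\), \(\Lambda(z)\) at the center and \(a_i\notin\Lambda(z_i)\) at the leaf; the appeal to Corollary~\ref{cor:leaf-support} for \(d_G(x)=2\) is unnecessary, since \(x\notin B\) together with the two thread edges at \(x\) already forces \(d_G(x)=2\).
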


\begin{proof}
First consider the center \(z\) of a star component of \(G'_F\).
The selected first edges incident with \(z\) have the distinct colors
\(1,\ldots,t\), whereas the newly colored edges incident with \(z\)
receive pairwise distinct colors from
\(
\Lambda(z)
=[5]\setminus\bigl(\{1,\ldots,t\}\cup\{4\}\bigr).
\)
Thus the coloring is proper at \(z\).

For each \(i\in[t]\), the neighbors of \(e^i_1\) are \(e^i_2\), which
has color \(4\), and the other edges incident with \(z\). Their colors
belong to the pairwise disjoint sets
\(
\{4\},
\{1,\ldots,t\}\setminus\{i\},
\Lambda(z),
\)
and are pairwise distinct within each set. Hence all neighbors of
\(e^i_1\) have distinct colors.

Now consider a leaf \(z_i\). The two other edges incident with \(z_i\)
receive distinct colors from
\(
\Lambda(z_i)=[5]\setminus\{a_i,b_i\},
\)
where
\(
a_i=\col(e^i_{\ell_i-1})
\)
and
\(
b_i=\col(e^i_{\ell_i}).
\)
Since these colors avoid \(b_i\), the coloring is proper at \(z_i\).
Since they also avoid \(a_i\), all neighbors of
\(e^i_{\ell_i}\) have pairwise distinct colors.

Finally, let \(e^i_j\) be an internal edge of \(P_i\), where
\(2\le j\le\ell_i-1\). Its endpoints lie in \(\hat G-B\) and hence
are \(2\)-vertices of \(G\). Therefore
\(
N_G(e^i_j)=\{e^i_{j-1},e^i_{j+1}\}.
\)
These two edges have distinct colors by
\eqref{eq:periodic-pattern1}.

Thus every neighbor of a selected-thread edge has already been colored,
and these colors are pairwise distinct. They are unchanged by any
further extension, so every selected-thread edge is rich in every
proper extension.
\end{proof}

\color{black}

It remains to color the uncolored edges incident with \(B\). We encode these edges in an auxiliary graph \(Q\). Let \(X_2\) be the set of internal vertices of the unselected \(B\)-threads of length two, and
let \(X_p\) be the set of leaves of \(G\) adjacent to \(B\). For each unselected \(B\)-thread \(P\) of length at least three, introduce two new vertices \(x_P\) and \(y_P\), with all such vertices distinct and outside \(V(G)\), and let \(X_{\ge3}\) be their set. Let \(Q\) be the graph with vertex set
\[
V(Q)=B\cup X_2\cup X_{\ge3}\cup X_p
\]
and edge set defined as follows:
\begin{enumerate}[label=\textup{(\roman*)}]
\item for each unselected \(B\)-thread \(P=uv\) of length one, add
the edge \(uv\);
\item for each unselected \(B\)-thread \(P=uxv\) of length two, add
the edges \(ux\) and \(xv\);
\item for each unselected \(B\)-thread \(P\) of length at least three
with endpoints \(u,v\), add the disjoint pendant edges \(ux_P\) and
\(vy_P\), representing the two end edges of \(P\);
\item for each pendant edge \(vu\) of \(G\) with \(v\in B\), add the
edge \(vu\).
\end{enumerate}

Every edge of \(Q\) has one or two endpoints in \(B\). For each
\(v\in B\), its incident edges in \(Q\) correspond bijectively to the
uncolored edges of \(G\) incident with \(v\). Hence
\(
d_Q(v)=3-d_F(v)\le2,
\)
where the inequality follows because \(F\) is an edge cover. Moreover,
vertices in \(X_2\) have degree two, whereas those in
\(X_{\ge3}\cup X_p\) have degree one. Thus
\(
\Delta(Q)\le2.
\)

We assign a list \(L(e)\) to each edge \(e\in E(Q)\). If \(v\) is the
unique endpoint of \(e\) in \(B\), set
\(
L(e)=\Lambda(v).
\)
If \(e=uv\) has both endpoints in \(B\), then \(e\) corresponds to an
unselected \(B\)-thread of length one; in this case, set
\begin{equation}\label{eq:direct-list}
L(e)=\Lambda(u)\cap\Lambda(v).
\end{equation}
By~\eqref{eq:port-data}, every list \(L(e)\) contains \(5\) and is
therefore nonempty.

Figure~\ref{fig:auxiliary-Q-lists} illustrates the construction of
\(Q\) and its edge lists. Vertices of \(B\) are shown as filled black
circles, other vertices inherited from \(G\) as open circles, and new
vertices as squares.

\color{black}

\begin{figure}[htbp]
\centering
\begin{tikzpicture}[x=0.85cm,y=0.85cm]

% ============================================================
% Styles
% ============================================================
\tikzset{
  Bv/.style={circle,fill=black,inner sep=2.2pt},
  Ov/.style={circle,draw=black,fill=white,line width=0.8pt,inner sep=2.2pt},
  Nv/.style={rectangle,draw=black,fill=white,line width=0.8pt,
             minimum width=5pt,minimum height=5pt,inner sep=0pt}
}

% Bounding box
\path[use as bounding box] (-0.6,-1.2) rectangle (15.2,4.7);

% ============================================================
% Coordinates
% Keep the original vertices in the same positions as in G
% ============================================================

% B-vertices
\coordinate (v1)  at (3.00,3.35);
\coordinate (v2)  at (0.80,2.00);
\coordinate (v3)  at (5.20,2.00);
\coordinate (v4)  at (1.00,0.00);
\coordinate (v5)  at (5.00,0.00);

% Original retained vertices
\coordinate (v9)  at (5.10,1.00);
\coordinate (v15) at (6.05,0.00);

% New vertices introduced in Q
\coordinate (xP)  at (1.55,3.35);  % for the unselected 3-thread at v1
\coordinate (yP)  at (1.75,0.00);  % for the unselected 3-thread at v4
\coordinate (xQ)  at (1.95,2.00);  % for the unselected 4-thread at v2
\coordinate (yQ)  at (4.05,2.00);  % for the unselected 4-thread at v3

% ============================================================
% Edges of Q
% ============================================================

% (i) from the unselected 1-thread
\draw[black,line width=0.9pt] (v2)--(v4);

% (ii) from the unselected 2-thread
\draw[black,line width=0.9pt] (v3)--(v9)--(v5);

% (iii) from the unselected 3-thread
\draw[black,line width=0.9pt] (v1)--(xP);
\draw[black,line width=0.9pt] (v4)--(yP);

% (iii) from the unselected 4-thread
\draw[black,line width=0.9pt] (v2)--(xQ);
\draw[black,line width=0.9pt] (v3)--(yQ);

% pendant edge of G
\draw[black,line width=0.9pt] (v5)--(v15);

% ============================================================
% Vertices
% ============================================================

% B-vertices: filled black circles
\node[Bv] at (v1) {};
\node[Bv] at (v2) {};
\node[Bv] at (v3) {};
\node[Bv] at (v4) {};
\node[Bv] at (v5) {};

% Original non-B vertices kept in Q: open circles
\node[Ov] at (v9) {};
\node[Ov] at (v15) {};

% New vertices: squares
\node[Nv] at (xP) {};
\node[Nv] at (yP) {};
\node[Nv] at (xQ) {};
\node[Nv] at (yQ) {};

% ============================================================
% Labels
% ============================================================

% Labels of B-vertices
\node[font=\scriptsize,fill=white,inner sep=1pt] at (3.00,3.68) {$v_1$};
\node[font=\scriptsize,fill=white,inner sep=1pt] at (0.42,2.18) {$v_2$};
\node[font=\scriptsize,fill=white,inner sep=1pt] at (5.58,2.18) {$v_3$};
\node[font=\scriptsize,fill=white,inner sep=1pt] at (0.62,-0.26) {$v_4$};
\node[font=\scriptsize,fill=white,inner sep=1pt] at (5.00,-0.30) {$v_5$};

% Labels of original non-B vertices
\node[font=\scriptsize,fill=white,inner sep=1pt] at (5.45,1.20) {$v_9$};
\node[font=\scriptsize,fill=white,inner sep=1pt] at (6.10,-0.30) {$v_{15}$};

% Labels of new vertices
\node[font=\scriptsize,fill=white,inner sep=1pt] at (1.55,3.68) {$x_P$};
\node[font=\scriptsize,fill=white,inner sep=1pt] at (1.75,-0.30) {$y_P$};
\node[font=\scriptsize,fill=white,inner sep=1pt] at (1.95,2.28) {$x_Q$};
\node[font=\scriptsize,fill=white,inner sep=1pt] at (4.05,2.28) {$y_Q$};

% Title under the graph
%\node[font=\small] at (3.0,-0.85) {The auxiliary graph \(Q\)};

% ============================================================
% Right-side list assignments
% ============================================================

\draw[black,line width=0.7pt,rounded corners=2pt] (7.20,-0.60) rectangle (13.5,4.10);

\node[font=\small\bfseries,anchor=west] at (7.45,3.75) {Lists \(L(e)\), \(e\in E(Q)\)};

\node[font=\scriptsize,anchor=west] at (7.45,3.25)
{$L(v_1x_P)=\Lambda(v_1)=\{3,5\}$};

\node[font=\scriptsize,anchor=west] at (7.45,2.75)
{$L(v_4y_P)=\Lambda(v_4)=\{2,3,5\}$};

\node[font=\scriptsize,anchor=west] at (7.45,2.25)
{$L(v_2x_Q)=\Lambda(v_2)=\{2,3,5\}$};

\node[font=\scriptsize,anchor=west] at (7.45,1.75)
{$L(v_3y_Q)=\Lambda(v_3)=\{1,3,5\}$};

\node[font=\scriptsize,anchor=west] at (7.45,1.25)
{$L(v_3v_9)=\Lambda(v_3)=\{1,3,5\}$};

\node[font=\scriptsize,anchor=west] at (7.45,0.75)
{$L(v_9v_5)=\Lambda(v_5)=\{2,3,5\}$};

\node[font=\scriptsize,anchor=west] at (7.45,0.25)
{$L(v_5v_{15})=\Lambda(v_5)=\{2,3,5\}$};

\node[font=\scriptsize,anchor=west] at (7.45,-0.25)
{$L(v_2v_4)=\Lambda(v_2)\cap\Lambda(v_4)=\{2,3,5\}$};

\end{tikzpicture}
\caption{The auxiliary graph \(Q\) and the list \(L(e)\) assigned to each edge \(e\in E(Q)\).}
\label{fig:auxiliary-Q-lists}
\end{figure}

\begin{lemma}\label{lem:sub2-list}
Let \(H\) be a graph with \(\Delta(H)\le2\), and let \(L\) assign a
list of colors to each edge of \(H\). Suppose that \(|L(e)|\ge2\) for
every \(e\in E(H)\), and that every cycle component of \(H\) contains
an edge \(e\) with \(|L(e)|\ge3\). Then \(H\) has a proper
edge-coloring \(\psi\) such that
\(
\psi(e)\in L(e)
\)
for every 
\(
e\in E(H).
\)
\end{lemma}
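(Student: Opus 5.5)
Since \(\Delta(H)\le2\), every component of \(H\) is a path (possibly a single vertex), or a cycle. A proper edge-coloring with colors from the lists can be found one component at a time, because the constraints never cross components. So the plan is to handle the two kinds of components separately. The point of the argument is that \(L(H)\) restricted to a component is itself a path or a cycle. The statement is then the standard fact that paths are \(2\)-choosable, and that cycles are choosable when one vertex has a list of size three.

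\emph{Path components.} Let \(e_1,e_2,\ldots,e_m\) be the edges of a path component in order, so that \(e_j\) is adjacent only to \(e_{j-1}\) and \(e_{j+1}\). We color greedily from \(e_1\) to \(e_m\). When \(e_j\) is colored, only \(e_{j-1}\) among its neighbours is already colored. This forbids at most one color, and \(|L(e_j)|\ge2\) leaves a choice. The resulting coloring is proper, since adjacent edges are consecutive and each was chosen to differ from its predecessor.

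\emph{Cycle components.} Let the cycle have edges \(e_1,\ldots,e_m\) in cyclic order, labelled so that \(|L(e_m)|\ge3\). Here \(m\ge3\), because \(H\) is a graph, so the cycles are simple and of length at least three. Color \(e_1,\ldots,e_{m-1}\) greedily in order as in the path case; each step forbids at most one color, namely that of the predecessor. Finally color \(e_m\): its only neighbours are \(e_{m-1}\) and \(e_1\), which forbid at most two colors. Since \(|L(e_m)|\ge3\), an admissible color remains.

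There is no genuine obstacle in this argument. The only point needing care is the ordering on a cycle: the edge with the list of size at least three must be colored last. Only then is it the unique edge facing two already-colored neighbours, while every other edge faces at most one. Combining the colorings of all components gives the required \(\psi\).
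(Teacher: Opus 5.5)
Your proposal is correct and follows essentially the same argument as the paper. You split $H$ into path and cycle components, color paths greedily from one end, and on each cycle color the path $C-e_0$ greedily before coloring last the edge $e_0$ with $|L(e_0)|\ge3$, which then has at most two forbidden colors.
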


\begin{proof}
Every component of \(H\) containing an edge is a path or a cycle.
A path can be colored greedily from one end, since each edge has at
most one previously colored neighbor and a list of size at least two.

For a cycle component \(C\), choose \(e_0\in E(C)\) with
\(|L(e_0)|\ge3\). Color the path \(C-e_0\) greedily. The edge \(e_0\)
then has at most two colored neighbors, so a color in \(L(e_0)\)
distinct from their colors remains available. Coloring the components
independently completes the proof.
\end{proof}

\begin{lemma}\label{lem:port-list1}
The graph \(Q\) has a proper edge-coloring \(\psi\) such that
\(\psi(e)\in L(e)\) for every \(e\in E(Q)\).
\end{lemma}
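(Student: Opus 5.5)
The plan is to reduce to Lemma~\ref{lem:sub2-list}. The only obstruction to applying it directly is that some lists may be too small. I will therefore precolor the edges with singleton lists and then check the hypotheses of that lemma on the remaining graph. All sizes come from \eqref{eq:port-data}: \(|\Lambda(v)|=4-d_F(v)\) and \(5\in\Lambda(v)\), together with \(d_Q(v)=3-d_F(v)\).

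\textbf{List sizes.} First classify the edges of \(Q\).
\begin{itemize}
\item If \(e\) has a unique endpoint \(v\in B\), then \(d_Q(v)\ge1\) forces \(d_F(v)\le2\), so \(|L(e)|=|\Lambda(v)|\ge2\). Moreover, \(|L(e)|=3\) exactly when \(d_F(v)=1\).
\item If both ends of \(e=uv\) lie in \(B\), then \(e\) is a thread of length one, hence \(e\in M\). Consequently \(d_F(u)\le d_{G'}(u)\le2\), and similarly for \(v\). Both lists therefore have size at least \(2\) and contain \(5\), so \(L(e)\ni5\). However, \(L(e)\) may equal \(\{5\}\). For example, a center with \(t=2\) has \(\Lambda=\{3,5\}\), while a leaf ending with colors \(3,2\) has \(\Lambda=\{1,4,5\}\).
\end{itemize}

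\textbf{Precoloring singletons.} Let \(S\) be the set of edges \(e\in E(Q)\) with \(L(e)=\{5\}\). Every edge of \(S\) lies in \(M\), and \(M\) is a matching by \eqref{eq:M-matching1}. Hence the edges of \(S\) are pairwise nonadjacent, and I color all of them \(5\).

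Now let \(f\in E(Q)\setminus S\) be adjacent to some \(e=uv\in S\), say at \(v\). Then \(d_Q(v)=2\), so \(d_F(v)=1\) and \(|\Lambda(v)|=3\). Since \(v\) is incident with only one edge of \(M\), the edge \(f\notin M\). Thus \(f\) has \(v\) as its unique endpoint in \(B\), and \(L(f)=\Lambda(v)\) has size \(3\). Replace \(L(f)\) by \(L(f)\setminus\{5\}\), which has size \(2\). An edge \(f\) cannot be adjacent to two edges of \(S\), because its other endpoint is not in \(B\). Setting \(H=Q-S\) with these modified lists, every edge of \(H\) has a list of size at least \(2\), and \(\Delta(H)\le\Delta(Q)\le2\).

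\textbf{Cycle condition.} This is the step needing care. Let \(C\) be a cycle component of \(H\).
\begin{itemize}
\item Every vertex of \(C\) has degree \(2\), so no vertex of \(X_{\ge3}\cup X_p\) lies on \(C\). Every vertex of \(B\cap V(C)\) satisfies \(d_Q=2\), hence \(d_F=1\) and \(|\Lambda|=3\).
\item No vertex of \(C\) is incident with an edge of \(S\). If it were, its degree in \(H\) would drop below \(2\).
\item \(C\) is not composed solely of \(M\)-edges, since \(M\) is a matching. So \(C\) contains an edge \(f\) with a unique endpoint \(v\in B\).
\end{itemize}
For such an \(f\), the list \(L(f)=\Lambda(v)\) is unmodified and has size \(3\).

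\textbf{Conclusion.} Lemma~\ref{lem:sub2-list} now gives a proper list coloring of \(H\). Properness at the edges of \(S\) is guaranteed because all their neighbors avoid \(5\). Adding the precolored edges of \(S\) therefore yields the required \(\psi\) on \(Q\).
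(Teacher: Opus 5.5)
Your proposal is correct and follows the paper's proof essentially step for step. You precolor the singleton-list edges with $5$ (these are necessarily length-one threads with list $\{5\}$, and they form a matching), delete $5$ from the lists of their neighbours, and then verify the hypotheses of Lemma~\ref{lem:sub2-list}, using that every cycle component of $Q-S$ contains a non-$M$ edge whose $B$-endpoint has $d_F=1$ and meets no edge of $S$, so its list has size $3$. The one point you leave implicit is that edges of $M\setminus S$ keep lists of size at least $2$ (they contain $5$, are not $\{5\}$, and are not adjacent to $S$), and this is immediate from your definitions.
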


\begin{proof}
Let
\(
S=\{e\in E(Q):|L(e)|=1\}.
\)
Suppose that \(e\) has a unique endpoint \(v\) in \(B\). By
\eqref{eq:port-data} and \(d_Q(v)=3-d_F(v)\),
\(
|L(e)|=|\Lambda(v)|
       =4-d_F(v)
       =1+d_Q(v)
       \ge2.
\)
Thus every edge of \(S\) has both endpoints in \(B\) and hence
corresponds to an unselected \(B\)-thread of length one. Since every
list contains \(5\), we have \(L(e)=\{5\}\) for each \(e\in S\).
Moreover, \(S\) is a matching by~\eqref{eq:M-matching1}.

Color every edge of \(S\) with \(5\), let \(Q'=Q-S\), and define, for
each \(e\in E(Q')\),
\[
L'(e)=
\begin{cases}
L(e)\setminus\{5\},
   & \text{if \(e\) is adjacent to an edge of \(S\)},\\
L(e),
   & \text{otherwise}.
\end{cases}
\]

We first show that
\(
|L'(e)|\ge2
\)
for every 
\(
e\in E(Q').
\)
If both endpoints of \(e\) lie in \(B\), then \(e\) corresponds to a length-one thread. Since all such edges form a matching, \(e\) is not adjacent to \(S\). Hence \(L'(e)=L(e)\), and \(|L'(e)|\ge2\) because
\(e\notin S\). Now suppose that \(e\) has a unique endpoint \(v\) in \(B\). If no edge of \(S\) is incident with \(v\), then \(L'(e)=L(e)\), and the equality above gives \(|L'(e)|\ge2\). Otherwise, \(e\) and an edge of
\(S\) are both incident with \(v\). Since \(d_Q(v)\le2\), we have \(d_Q(v)=2\), and therefore
\(
|L(e)|=1+d_Q(v)=3.
\)
Deleting \(5\) leaves \(|L'(e)|=2\).

It remains to verify the condition on cycle components. Let \(C\) be
a cycle component of \(Q'\). No edge arising from a pendant edge or
from a thread of length at least three can lie on \(C\), since such an
edge has a degree-one endpoint in \(Q\). Thus every edge of \(C\)
either corresponds to a length-one thread or belongs to a retained
length-two thread. Since the length-one edges form a matching, \(C\)
contains an edge \(e\) from a length-two thread.

Let \(v\in B\) be the endpoint of \(e\) in \(B\). Since \(v\) lies on
the cycle \(C\),
\(
d_{Q'}(v)=2.
\)
As \(d_Q(v)\le2\), it follows that \(d_Q(v)=2\), and no edge of \(S\)
is incident with \(v\). The other endpoint of \(e\) lies outside
\(B\), whereas every edge of \(S\) has both endpoints in \(B\).
Hence \(e\) is not adjacent to \(S\), and
\(
|L'(e)|=|L(e)|=1+d_Q(v)=3.
\)
Therefore every cycle component of \(Q'\) contains an edge whose list
has size at least three.

Since \(\Delta(Q')\le2\), Lemma~\ref{lem:sub2-list} gives a proper
\(L'\)-edge-coloring of \(Q'\). Together with the edges of \(S\), all
colored \(5\), this yields a proper edge-coloring \(\psi\) of \(Q\):
the edges of \(S\) form a matching, and \(5\) was deleted from the
lists of all edges adjacent to \(S\). Finally,
\(L'(e)\subseteq L(e)\) for every \(e\in E(Q')\), so
\(\psi(e)\in L(e)\) for every \(e\in E(Q)\).
\end{proof}

Transfer \(\psi(e)\) to the edge of \(G\) represented by each
\(e\in E(Q)\). For an unselected thread of length at least three, this
colors only its two end edges. Since \(\psi\) is proper and respects
\(L\), the previously uncolored edges at each \(v\in B\) now receive
pairwise distinct colors from \(\Lambda(v)\).
Lemma~\ref{lem:selected-rich} then shows that the current coloring is
proper at \(B\) and that the colors on \(N_G(f)\) are pairwise distinct
for every selected-thread edge \(f\). This transfer is illustrated in
Figure~\ref{fig:transfer-Q-to-G}.

It remains to color the internal edges of the unselected threads of
length at least three. Fix such a thread \(P\), with edges
\(e_1,\ldots,e_\ell\) ordered from \(u\in B\) to \(v\in B\), and put
\(a=\col(e_1)\) and \(b=\col(e_\ell)\).

Suppose first that \(\ell=3\). Let \(C(u)\) be the set of colors on
the three edges incident with \(u\). Since \(|C(u)|=3\), we may choose
\begin{equation}\label{eq:length-three}
\col(e_2)\in[5]\setminus\bigl(C(u)\cup\{b\}\bigr).
\end{equation}
Indeed, at most four colors are excluded. Since \(a\in C(u)\), this
choice extends the coloring properly to \(P\). Moreover, the other two
edges incident with \(u\) have distinct colors in \(C(u)\), whereas
\(\col(e_2)\notin C(u)\). Hence the colors on \(N_G(e_1)\) are
pairwise distinct.

If \(\ell\ge4\), Lemma~\ref{lem:path-sequence} extends the prescribed
colors \(a\) and \(b\) to a strong \(5\)-edge-coloring of \(P\).
Since the internal vertices of \(P\) are \(2\)-vertices of \(G\), the
colors on the neighborhood of every internal edge of \(P\) are
pairwise distinct.

Applying these extensions independently to all such threads yields a
proper \(5\)-edge-coloring of \(G\). In this coloring, every
selected-thread edge, the first edge of every unselected thread of
length three, and every internal edge of every unselected thread of
length at least four is rich.

\begin{figure}[htbp]
\centering
\begin{tikzpicture}[x=0.85cm,y=0.85cm]

% ============================================================
% Styles
% ============================================================
\tikzset{
  Bv/.style={circle,fill=black,inner sep=2.2pt},
  Ov/.style={circle,draw=black,fill=white,line width=0.8pt,inner sep=2.2pt},
  Nv/.style={rectangle,draw=black,fill=white,line width=0.8pt,
             minimum width=5pt,minimum height=5pt,inner sep=0pt},
  c1/.style={red,line width=0.95pt},
  c2/.style={green!60!black,line width=0.95pt},
  c3/.style={yellow!70!orange,line width=0.95pt},
  c4/.style={blue,line width=0.95pt},
  c5/.style={purple,line width=0.95pt},
  lab/.style={font=\scriptsize,fill=white,inner sep=1pt}
}

% ============================================================
% Bounding box
% ============================================================
\path[use as bounding box] (-0.7,-1.55) rectangle (16.0,5.0);

% ============================================================
% Left graph: auxiliary graph Q with coloring psi
% ============================================================
\begin{scope}[shift={(0,0)}]

% Coordinates of Q
\coordinate (v1)  at (3.00,3.35);
\coordinate (v2)  at (0.80,2.00);
\coordinate (v3)  at (5.20,2.00);
\coordinate (v4)  at (1.00,0.00);
\coordinate (v5)  at (5.00,0.00);

\coordinate (v9)  at (5.10,1.00);
\coordinate (v15) at (6.05,0.00);

\coordinate (xP)  at (1.55,3.35);
\coordinate (yP)  at (1.75,0.00);
\coordinate (xQ)  at (1.95,2.00);
\coordinate (yQ)  at (4.05,2.00);

% Edges of Q with colors psi
\draw[c5] (v2)--(v4);
\node[lab] at (0.48,0.98) {$5$};

\draw[c3] (v3)--(v9);
\node[lab] at (5.60,1.58) {$3$};

\draw[c2] (v9)--(v5);
\node[lab] at (5.58,0.42) {$2$};

\draw[c3] (v1)--(xP);
\node[lab] at (2.28,3.65) {$3$};

\draw[c2] (v4)--(yP);
\node[lab] at (1.40,-0.28) {$2$};

\draw[c2] (v2)--(xQ);
\node[lab] at (1.38,2.30) {$2$};

\draw[c1] (v3)--(yQ);
\node[lab] at (4.62,2.30) {$1$};

\draw[c3] (v5)--(v15);
\node[lab] at (5.56,-0.28) {$3$};

% Vertices
\node[Bv] at (v1) {};
\node[Bv] at (v2) {};
\node[Bv] at (v3) {};
\node[Bv] at (v4) {};
\node[Bv] at (v5) {};

\node[Ov] at (v9) {};
\node[Ov] at (v15) {};

\node[Nv] at (xP) {};
\node[Nv] at (yP) {};
\node[Nv] at (xQ) {};
\node[Nv] at (yQ) {};

% Labels
\node[lab] at (3.00,3.72) {$v_1$};
\node[lab] at (0.6,2.32) {$v_2$};
\node[lab] at (5.65,2.20) {$v_3$};
\node[lab] at (0.75,-0.30) {$v_4$};
\node[lab] at (5.02,-0.34) {$v_5$};

\node[lab] at (5.48,1.08) {$v_9$};
\node[lab] at (6.22,-0.34) {$v_{15}$};

\node[lab] at (1.55,3.72) {$x_P$};
\node[lab] at (1.85,-0.30) {$y_P$};
\node[lab] at (1.95,2.32) {$x_Q$};
\node[lab] at (4.05,2.32) {$y_Q$};

%\node[font=\small] at (3.0,-0.95) {The auxiliary graph \(Q\) with \(\psi\)};

\end{scope}

% ============================================================
% Transfer arrow
% ============================================================
\draw[->,line width=1.0pt] (6.80,1.65)--(7.65,1.65);
\node[font=\small,align=center] at (7.22,2.48) {transfer\\colors};

% ============================================================
% Right graph: original graph G with transferred coloring
% ============================================================
\begin{scope}[shift={(8.65,0)}]

% Coordinates of G
\coordinate (g1)  at (3.0,3.35);
\coordinate (g2)  at (0.8,2.0);
\coordinate (g3)  at (5.2,2.0);
\coordinate (g4)  at (1.0,0.0);
\coordinate (g5)  at (5.0,0.0);

\coordinate (g6)  at (1.90,2.70);
\coordinate (g7)  at (4.10,2.70);
\coordinate (g8)  at (3.00,0.00);
\coordinate (g9)  at (5.10,1.00);
\coordinate (g10) at (2.35,2.30);
\coordinate (g11) at (1.65,1.15);
\coordinate (g12) at (1.35,3.75);
\coordinate (g13) at (3.00,4.15);
\coordinate (g14) at (4.65,3.75);
\coordinate (g15) at (6.05,0.00);

% Selected F-threads
\draw[c1] (g1)--(g6);
\node[lab] at (2.18,3.12) {$1$};

\draw[c4] (g6)--(g2);
\node[lab] at (1.26,2.54) {$4$};

\draw[c2] (g1)--(g7);
\node[lab] at (3.82,3.12) {$2$};

\draw[c4] (g7)--(g3);
\node[lab] at (4.76,2.55) {$4$};

\draw[c1] (g4)--(g8);
\node[lab] at (1.95,-0.28) {$1$};

\draw[c4] (g8)--(g5);
\node[lab] at (4.05,-0.28) {$4$};

% Unselected 1-thread
\draw[c5] (g2)--(g4);
\node[lab] at (0.48,0.98) {$5$};

% Unselected 2-thread
\draw[c3] (g3)--(g9);
\node[lab] at (5.38,1.58) {$3$};

\draw[c2] (g9)--(g5);
\node[lab] at (5.28, 0.5) {$2$};

% Unselected 3-thread
\draw[c3] (g1)--(g10);
\node[lab] at (2.9,2.7) {$3$};

\draw[c4] (g10)--(g11);
\node[lab] at (2.25,1.74) {$4$};

\draw[c2] (g11)--(g4);
\node[lab] at (1.66,0.60) {$2$};

% Unselected 4-thread
\draw[c2] (g2)--(g12);
\node[lab] at (0.86,3.08) {$2$};

\draw[c3] (g12)--(g13);
\node[lab] at (2.12,4.15) {$3$};

\draw[c4] (g13)--(g14);
\node[lab] at (3.88,4.15) {$4$};

\draw[c1] (g14)--(g3);
\node[lab] at (5.15,3.08) {$1$};

% Pendant edge
\draw[c3] (g5)--(g15);
\node[lab] at (5.58,-0.28) {$3$};

% Vertices
\foreach \p in {g1,g2,g3,g4,g5}{
  \node[Bv] at (\p) {};
}
\foreach \p in {g6,g7,g8,g9,g10,g11,g12,g13,g14,g15}{
  \node[Ov] at (\p) {};
}

% Labels of G
\node[lab] at (3.00,3.72) {$v_1$};
\node[lab] at (0.28,2.18) {$v_2$};
\node[lab] at (5.72,2.18) {$v_3$};
\node[lab] at (0.50,-0.32) {$v_4$};
\node[lab] at (5.14,-0.40) {$v_5$};

\node[lab] at (1.62,3.06) {$v_6$};
\node[lab] at (4.38,3.0) {$v_7$};
\node[lab] at (3.00,-0.40) {$v_8$};
\node[lab] at (5.55,1.08) {$v_9$};
\node[lab] at (2.78,2.12) {$v_{10}$};
\node[lab] at (2.18,1.10) {$v_{11}$};
\node[lab] at (1.08,4.12) {$v_{12}$};
\node[lab] at (3.00,4.58) {$v_{13}$};
\node[lab] at (4.92,4.12) {$v_{14}$};
\node[lab] at (6.52,-0.05) {$v_{15}$};

%\node[font=\small] at (3.05,-0.98) {The original graph \(G\) with the transferred coloring};

\end{scope}

% ============================================================
% Color legend
% ============================================================
\begin{scope}[shift={(1.95,0)}]
\draw[red,line width=1.45pt] (0.75,-1.25)--(1.05,-1.25);
\node[font=\scriptsize,anchor=west] at (1.20,-1.25) {color \(1\)};

\draw[green!60!black,line width=1.45pt] (2.75,-1.25)--(3.05,-1.25);
\node[font=\scriptsize,anchor=west] at (3.20,-1.25) {color \(2\)};

\draw[yellow!70!orange,line width=1.45pt] (4.75,-1.25)--(5.05,-1.25);
\node[font=\scriptsize,anchor=west] at (5.20,-1.25) {color \(3\)};

\draw[blue,line width=1.45pt] (6.75,-1.25)--(7.05,-1.25);
\node[font=\scriptsize,anchor=west] at (7.20,-1.25) {color \(4\)};

\draw[purple,line width=1.45pt] (8.75,-1.25)--(9.05,-1.25);
\node[font=\scriptsize,anchor=west] at (9.20,-1.25) {color \(5\)};
\end{scope}

\end{tikzpicture}
\caption{The chosen colors on \(Q\) and the resulting coloring of \(G\).}
\label{fig:transfer-Q-to-G}
\end{figure}

\begin{proof}[Completion of the proof of Theorem~\ref{thm:main}]
The preceding construction yields a proper \(5\)-edge-coloring
\(\col\) of \(G\). Properness at vertices of \(B\) follows from
Lemmas~\ref{lem:port-list1} and~\ref{lem:selected-rich}. At the
internal vertices of selected threads it follows from the periodic
pattern~\eqref{eq:periodic-pattern1}; for unselected threads it follows
from the proper coloring of \(Q\) when the length is two, from
\eqref{eq:length-three} when the length is three, and from
Lemma~\ref{lem:path-sequence} when the length is at least four.
Properness at leaves is immediate.

By Lemma~\ref{lem:selected-rich}, every selected-thread edge is rich.
Since each selected thread has length at least two by
\eqref{eq:selected-long}, every such edge is adjacent to another rich
edge on the same thread.

Because \(F\) covers \(B\), every vertex of \(B\) is incident with a
selected thread. Hence every pendant edge and every edge of an
unselected thread incident with \(B\) has a rich selected-thread
neighbor. This covers all edges of unselected threads of length at
most two and the end edges of all longer unselected threads.

For an unselected thread of length three, its middle edge is adjacent
to the first end edge, which is rich by the choice in
\eqref{eq:length-three}. For an unselected thread of length at least
four, all internal edges are rich, and there are at least two
consecutive internal edges. Thus every internal edge has a rich
internal neighbor.

Consequently, every non-isolated edge of \(G\) has a rich neighbor.
Therefore \(\col\) is a rich-neighbor \(5\)-edge-coloring of \(G\),
contradicting the choice of \(G\).
\end{proof}

\section{Concluding remarks}

We proved that every subcubic graph \(G\) satisfying
\(\Delta(G[V_3])\le1\) admits a rich-neighbor \(5\)-edge-coloring.
Thus the conjecture of Petruševski and Škrekovski holds whenever each
\(3\)-vertex has at most one \(3\)-neighbor, extending the previously
known case in which the \(3\)-vertices are independent. The bound is
best possible, as shown by the diamond \(K_4-e\).

The proof combines a thread decomposition with an minimal
edge cover and a list edge-coloring of an auxiliary graph of maximum
degree two. A natural next step is to consider graphs satisfying
\(\Delta(G[V_3])\le2\). In that setting, the length-one threads need
not form a matching, so the matching argument for the singleton lists
in \(Q\) no longer applies directly. Overcoming this compatibility
obstruction may provide further progress toward the full conjecture.

\section*{Acknowledgments}

This work was supported by the National Natural Science Foundation of China (No. 12271235) and Fujian Alliance of Mathematics (No. 2023SXLMMS09).

\section*{Data Availability Statement}

No datasets were generated or analyzed during the current study. 

\section*{Conflict of Interest}

The authors declare that they have no conflict of interest.

\end{document}